\newtheorem{thm}{Theorem}
\newtheorem{prop}{Proposition}
\newtheorem{lem}[thm]{Lemma}
\newtheorem{conj}{Conjecture}
\newdefinition{rem}{Remark}
\newdefinition{defi}{Definition}
\newproof{pf}{Proof}
\newcolumntype{M}[1]{>{\raggedright}m{#1}}
\def\Sp{\operatorname{Sp}}
\def\arctanh{\operatorname{arctanh}}
\def\dilog{\operatorname{dilog}}
\journal{*****}
\begin{document}

\begin{frontmatter}

\title{Integrable homogeneous potentials of degree $-1$ in the plane with small eigenvalues}
\author{Thierry COMBOT\fnref{label2}}
\ead{thierry.combot@u-bourgogne.fr}
\address{IMB, Universi\'e de Bourgogne, 9 avenue Alain Savary, 21078 Dijon Cedex }

\author{}

\address{}

\begin{abstract}
We give a complete classification of meromorphically integrable homogeneous potentials $V$ of degree $-1$ which are real analytic on $\mathbb{R}^2\setminus \{0\}$. In the more general case when $V$ is only meromorphic on an open set of an algebraic variety, we give a classification of all integrable potentials having a Darboux point $c$ with $V'(c)=-c,\; c_1^2+c_2^2\neq 0$ and $\hbox{Sp}(\nabla^2 V(c)) \subset\{-1,0,2\}$. We eventually present a conjecture for the other eigenvalues and the degenerate Darboux point case $V'(c)=0$.
\end{abstract}

\begin{keyword}
Morales-Ramis theory\sep Homogeneous potential \sep Central configurations \sep Differential Galois theory \sep Integrable systems


\end{keyword}

\end{frontmatter}

The problem of finding potentials which are integrable in the Liouville sense is a difficult and ancient problem. Liouville found that finding enough first integrals ($n$ for a $n$-dimensional potential) allows to solve the differential system associated to the potential by quadrature (the potential is then called integrable). The main difficulty is to find these first integrals, as they do not always exists, at least not globally. Almost all integrable rational potentials have rather simple first integrals, but one cannot even exclude very high degree rational first integrals. So one of the main problem is to find all integrable potentials, and certify that no others exist.

A Theorem from Morales-Ramis-Simo (Theorem 2 in \cite{19}) gives necessary conditions for integrability with meromorphic first integrals (see also earlier versions of this Theorem in \cite{1,4,5}). The differential Galois group of the variational equation near a non-trivial orbit should have an Abelian identity component. One difficulty is to find this non-trivial orbit, which led many authors to study homogeneous potentials. Indeed, aside physical interest, such potentials generically have straight line orbits, and then the Morales-Ramis-Simo Theorem can be applied. This procedure has been used in many non-integrability proofs \cite{2,3,6} and classifications. In particular, Maciejewski-Przybylska found all meromorphically integrable planar polynomial homogeneous potentials of degree $3,4$ in \cite{2,3}. In the case of the homogeneity degree $-1$, many results are linked to the $n$ body problem, which involves such homogeneous potentials of degree $-1$ \cite{8,9,24,25,26,27}.

In this article, we want to do a similar classification work for the homogeneity degree $-1$ as Maciejewski-Przybylska did for degree $3,4$ in \cite{2,3} in the plane. However, these articles generally consider polynomial potentials and we would like to extend the class of studied potentials to algebraic ones (e.g. to include n-body problems). In \cite{40}, we extended the Morales-Ramis-Simo theorem to the class of algebraic potentials; so we now recall our setting from \cite{40}.

Let $I=<P_1,\dots,P_s>$ a $2$-dimensional prime ideal of $\mathbb{C}[q_1,q_2,w_1,\dots,w_s]$ and $\Omega$ be a non-empty open set of $I^{-1}(0)$. Assume that the Jacobian of the application $w\mapsto (P_1(w),\dots,P_s(w))$ is maximal on $\Omega$, and that $\Omega$ has a ``homogeneity property''
$$\exists k_0,\dots,k_s \in\mathbb{Z},\; k_0\neq 0, \forall \alpha\in\mathbb{C}^*$$
$$(q,w)\in\Omega \Rightarrow (\alpha^{k_0} q_1,\alpha^{k_0} q_2,\alpha^{k_1} w_1,\dots, \alpha^{k_s} w_s)\in\Omega$$
On this $\Omega$, we can define a holomorphic homogeneous function $V$, which will be our potential. Remark that this class of potentials includes $V(q_1,q_2)=(q_1^2+q_2^2)^{-1/2}$ (which is integrable), by taking
$$\Omega=\{(q_1,q_2,w_1)\in\mathbb{C}^2\times \mathbb{C}^*,\;\;w_1^2-q_1^2-q_2^2=0\},\;\; V=w_1^{-1}$$
We will not succeed in finding all such meromorphically integrable homogeneous potentials, and thus we will add a technical assumption on eigenvalues at Darboux points, which, as we will see in Section \ref{secNon}, is of crucial importance in such integrability analysis.\\

The main theorems of this article are the following

\begin{thm}\label{thmmain1}
Let $V$ be a real analytic potential on $\mathbb{R}^2\setminus \{0\}$, homogeneous of degree $-1$. If $V$ is meromorphically integrable, then
$$V=\frac{a}{r}\quad a\in\mathbb{R}$$
\end{thm}

\begin{thm}\label{thmmain2}
Let $V$ be a holomorphic homogeneous potential of degree $-1$ on $\Omega\subset \mathcal{S}$. We assume that there exists $c\in\Omega$ such that $V'(c)=-c$, $c_1^2+c_2^2\neq 0$ and the spectrum of the Hessian matrix of $V$ at $c$ satisfies $\Sp(\nabla^2(V)(c))\subset\{-1,0,2\}$. If $V$ is meromorphically integrable, then $V$ belongs after rotation to one of the following families
$$V=\frac{a}{q_1}+\frac{b}{q_2},\quad V=\frac{a}{r},\quad V=\frac{aq_1}{(q_1+\epsilon iq_2)^2},\quad a,b\in\mathbb{C},\; \epsilon=\pm 1$$
$$\hbox{ with } \mathcal{S}=\{(q_1,q_2,r)\in\mathbb{C}^3,\;\;r^2=q_1^2+q_2^2\}$$
\end{thm}

This last unexpected case was found by Hietarinta in \cite{7}. However, not all homogeneous potentials satisfy these hypotheses. In particular, the condition $\Sp(\nabla^2(V)(c))\subset\{-1,0,2\}$ is very restrictive. However, we conjecture thank to computer computations that there are no integrable potentials in the plane with other eigenvalues.

\bigskip
In section \ref{Int}, we present variational equations and the Morales-Ramis-Simo Theorem.\\
In section \ref{secGen}, we present several properties of homogeneous potentials, in particular the notion of meromorphic integrability for homogeneous potentials on $\mathcal{S}$ and polar coordinates.\\
In section \ref{secTheo}, we prove that Theorem \ref{thmmain2} easily implies Theorem \ref{thmmain1}.\\
Section \ref{secNon} presents some properties of higher variational equations, and in particular a notion of non-degeneracy. We prove that this property is often satisfied by higher variational equations, and implies a uniqueness Theorem \ref{thm4}.\\
Section \ref{secCas} deals with the special case of eigenvalue $-1$ for which this non-degeneracy property is not satisfied, and thus requiring a clever analysis of higher variational equations.\\

\section{Introduction to Morales-Ramis-Simo theorem}\label{Int}

The main idea of the Morales-Ramis-Simo theorem below is that if a Hamiltonian system is meromorphically integrable, then the linearised system along a particular solution should also be ``integrable''. In this section, the Hamiltonian $H$ will only be assumed to be a $n$ degrees of freedom Hamiltonian over a general $2n$ dimensional complex analytic manifold $M$.

Let us consider a holomorphic function $f$ on $T^*M$, and a point $x\in T^*M$. The initial form of $f$ at $x$ is the lowest order non-zero term in the Taylor expansion of $f$ at $x$. It is in particular a homogeneous polynomial. If $f$ is a meromorphic function on $T^*M$, then its initial form is defined as the quotient of the initial form of its numerator and denominator.

This definition can then be generalized to curves. Given a complex analytic curve $\Gamma \subset T^*M$ parametrized by $t$, we consider for a holomorphic $f$ the Taylor expansion of $f$ at $x(t)$. The coefficients of this expansion are functions of $t$, and the initial form is the lowest order non-zero term (as a function of $t$) in this expansion. Remark that the valuation of $f$ at $x(t)$ can differ for some exceptional values of $t$ (this typically occurs at singular points of the variational equation). In the general meromorphic case, the initial form of $f$ on $\Gamma$ is then a homogeneous rational fraction with coefficients depending on $t$.

\begin{lem}[Ziglin, (look Audin \cite{44})]
Let $f_1,\dots,f_k$ be germs of functionally independent meromorphic functions over a neighbourhood of $0$ in $\mathbb{C}^n$. Then there exist polynomials $P_1,\dots,P_k \in\mathbb{C}[z_1,\dots,z_n]$ such that the initial forms at the origin of the functions $g_i=P_i(f_1,\dots,f_k)$ are rational fractions algebraically independent in $\mathbb{C}(z_1,\dots,z_k)$.
\end{lem}

Let us consider $\Gamma \subset T^*M$ a trajectory of the Hamiltonian field $X_H$. If this field has $n$ independent first integrals $f_1,\dots,f_n$, then after possibly algebraic transformations, the initial forms of these first integrals can be assumed to be independent thanks to Ziglin Lemma. Remark that if the Poisson bracket $\{f_i,f_j\}=0$, then so it is for their initial forms.

We now define variational equations, following Morales-Ramis-Simo \cite{19} page 860. Let us note $\varphi_t$ the flow of the Hamiltonian field $X_H$. We note
$$\varphi_t(y)=\sum\limits_k \varphi_t^{(k)}(x) (y-x)^k$$
the series expansion of $\varphi_y$ at $x$. We define accordingly
$$X_H(y)=\sum\limits_k X_H^{(k)}(x) (y-x)^k$$
The variational equations can now be written in a compact form
$$\dot{\varphi}_t^{(1)}=X_H^{(1)}\varphi_t^{(1)}$$
$$\dot{\varphi}_t^{(2)}=X_H^{(1)}\varphi_t^{(2)}+X_H^{(2)}(\varphi_t^{(1)})^2$$
$$\dot{\varphi}_t^{(3)}=X_H^{(1)}\varphi_t^{(3)}+2X_H^{(2)}(\varphi_t^{(1)},\varphi_t^{(2)})+X_H^{(3)}(\varphi_t^{(1)})^3$$
and the general formula is given by
$$\dot{\varphi}_t^{(k)}= \sum\limits_{j=1}^k \sum \frac{j!}{m_1! \dots m_s!} X_H^{(j)}((\varphi_t^{(i_1)})^{m_1},\dots,(\varphi_t^{(i_s)})^{m_s})$$
The point derivation correspond to the derivation with respect to time along a particular solution $\Gamma$ of $X_H$.\\

If the Hamiltonian system admits a first integral $f$, then the first order variational equation admits a rational first integral, the initial form of $f$. The same holds for higher variational equations: Noting $f_k$ the series expansion of $f$ consisting of the $k$ first terms of the Taylor series of $f$ beginning by the first non-zero term, we obtain for $f_k$ a polynomial (or rational fraction in the case $f$ meromorphic) which is a first integral of the $k$-th order variational equation.\\

Remark that the first order variational equation is a linear one, but higher order ones are not. These however can be ``linearized'' by the following process:
\begin{itemize}
\item The right term of each equation in $\varphi_t^{(k)}$ is a polynomial in $\varphi_t^{(i)}$ with $i<k$. So the rightside of the equation is  a linear combination of monomial in $\varphi_t^{(i)}$ with $i<k$.
\item A product, power of a solution of a linear differential equation is itself solution of a linear differential equation (called symmetric power/product).
\item We can thus replace each monomial of the righthandside by a new unknown function, will be a solution of a linear differential equation
\end{itemize}
The $k$-th order variational equation (non linear version) can then be replaced by a linear differential system, whose solutions are the same than the linear version.\\

In the following, when considering higher variational equation (i.e. $k>1$), we will always consider the linearised version. In section \ref{secHig}, we present a simple way to build in practice this linearised $k$-th order variational equation in our particular case (a potential with $2$ degrees of freedom).\\

To the $k$-th variational equation, we can associate a Galois group. This Galois group preserves all rational invariants of the differential system, and in particular the rational invariants coming form first integrals of the Hamiltonian field. Through this process comes the following constraint on these Galois groups.

\begin{thm}\label{thmmorales0} (Morales-Ramis-Simo \cite{19})
Let $H$ be a Hamiltonian over a complex analytic symplectic manifold $M$ of dimension $2n$. Assume $H$ is meromorphically integrable in the Liouville sense ($X_H$ admits $n$ independent meromorphic first integrals, pairwise Poisson commuting). Let $\Gamma$ be a connected not reduced to a point particular solution of $X_H$. Then the identity component of the Galois group of the $k$-th order variational equation near $\Gamma$ is Abelian over the base field of meromorphic functions on $\Gamma$.
\end{thm}

\section{Homogeneous potentials on algebraic manifolds}\label{secGen}
\subsection{Definitions}

We will consider from now a Hamiltonian system given by
$$H(p_1,p_2,q_1,q_2,\mathbf{w})=\frac{1}{2}(p_1^2+p_2^2) -V(q_1,q_2,\mathbf{w}) \;\;\hbox{ with }\;\; (q_1,q_2,\mathbf{w})\in\Omega\subset\mathcal{S}$$
The Hamiltonian $H$ is associated to a dynamical system $X_H$ on $\mathbb{C}^2\times \Omega$. The open set $\Omega$ is a subset of an algebraic variety $\mathcal{S}$, which corresponds to the space of positions. It projects ``well'' on $\mathbb{C}^n$ in the sense that the symplectic structure on $\mathbb{C}^2\times\Omega$ defined by the derivations in $p,q$ does not degenerate (look at page 2 or \cite{40} for more precisions). This Hamiltonian is a $2$ degrees of freedom system, and $H$ is holomorphic on $\mathbb{C}^2\times \Omega$.
\begin{defi}\label{defhom}
We say that a holomorphic potential on $\Omega\subset \mathcal{S}$ is homogeneous of degree $-1$ if for all $(q_1,q_2,\mathbf{w})\in\Omega,\; \alpha\in\mathbb{C}^*$
$$V(\alpha^{k_0} q_1,\alpha^{k_0} q_2,\alpha^{k_1} w_1,\dots, \alpha^{k_s} w_s) = \alpha^{-k_0} V(q_1,q_2,\mathbf{w})$$
\end{defi}

\bigskip

In the rest of the article, the potential $V$ will now be assumed to be holomorphic on $\Omega$ and homogeneous of degree $-1$. This type of potentials contains many useful algebraic potentials such as potentials in celestial mechanics (which often contains square roots due to the mutual distances appearing in the potential). The construction of this Hamiltonian system is the one we introduced in \cite{40} to define algebraic potentials.
Remark that according to the definition \ref{defhom}, it is always possible to multiply all the $k_i$ by an integer. Still, we cannot normalize the $k_0$ always to $1$. Indeed, to allow $k_0>1$ is necessary if we want to include algebraic extension of rational homogeneity degree, as $w_1^4=q_1^2+q_2^2$ (here we obtain $k_0=2,k_1=1$). A motivation to consider homogeneous potentials on (open sets of) algebraic manifolds instead of $\mathbb{C}^2$ is that we want to include the rotation-invariant potential $V=1/r$, which is always integrable in the sense of the following definition

\begin{defi}\label{def1}
Let $V$ be a holomorphic homogeneous potential of degree $-1$ on $\Omega$. We will say that $V$ is meromorphically integrable if there exists a first integral $I$ of $X_H$ meromorphic on $\mathbb{C}^2\times \Omega$ and functionally independent with $H$.
\end{defi}

In our particular setting (a potential with $2$ degrees of freedom over $\Omega \subset \mathcal{S}$), the Morales-Ramis-Simo theorem can be rewritten on the following form
\begin{thm}\label{thmmorales} (Combot \cite{40} Theorem 2. page 3)
Let $V$ be a holomorphic potential on an open set $\Omega\subset \mathcal{S}$ and $\Gamma\subset \mathbb{C}^2\times \Omega$ a non-stationary orbit of $V$. Assume that $\Omega\cap \Sigma(\mathcal{S})=\emptyset$.  If there are two first integrals meromorphic on $\mathbb{C}^2\times \Omega$ of $V$ that are in involution and functionally independent over an open neighbourhood of $\Gamma$, then the identity component of Galois group of the variational equation near $\Gamma$ is Abelian over the base field of meromorphic functions on $\Gamma$.
\end{thm}

Remark that in the original statement of \cite{40}, the particular curve $\Gamma$ is assumed to be not included in the singular set $\Sigma(\mathcal{S})$, but here we have already removed this singular set out of $\Omega$.

\subsection{Darboux points}

In Theorem \ref{thmmorales}, a key ingredient is the orbit $\Gamma$. To find such an orbit of our Hamiltonian system, we will use Darboux points.

\begin{defi}\label{def2}
Let $V$ be a holomorphic homogeneous potential of degree $-1$ on $\Omega$. We say that $c\in\Omega\setminus \{0\}$ is a \emph{Darboux point} of $V$ if
\begin{equation}\label{darbb}
\frac{\partial}{\partial q_1} V(c)= \alpha c_1\qquad \frac{\partial}{\partial q_2} V(c)= \alpha c_2
\end{equation}
The number $\alpha\in\mathbb{C}$ is called the \emph{multiplier} associated to $c$. We say that $c$ is non-degenerate (or proper in \cite{3}) if $\alpha\neq 0$.
\end{defi}

In the non-integrability setting, these Darboux points are also used in \cite{1,2,3,8,21} among others. Using homogeneity of $V$, we can always choose $\alpha\in\{0,-1\}$ and so in the following we will always choose the multiplier $\alpha=-1$ for a non-degenerate Darboux point (in which case we say hat the Darboux point is normalized). The most interesting property for us of these Darboux points is that they provide orbits:

\begin{defi}\label{def22}
Let $V$ be a holomorphic homogeneous potential of degree $-1$ on $\Omega$. Let $c\in\Omega$ be a Darboux point of $V$. A \emph{homothetic orbit of $V$ associated to $c$} is given by
$$q_i(t)=c_i\phi(t)^{k_0}\quad p_i(t)=c_i k_0\dot{\phi}(t) \phi(t)^{k_0-1}\quad i=1, 2$$
$$w_i(t)=c_{i+2} \phi(t)^{k_i}\quad i=1\dots s$$
with $\phi$ satisfying the following differential equation
$$\frac{1} {2} (k_0\dot{\phi} \phi^{k_0-1})^2=-\frac{\alpha} {\phi^{k_0}}+E \quad E\in\mathbb{C}$$
\end{defi}

This homothetic orbit is used by Morales-Ramis in \cite{21} to build simple integrability conditions thanks to the classification of Galois groups of the hypergeometric equation by Kimura \cite{14}. Along a homothetic orbit, the first order variational equation is given by
$$\ddot{X}=\frac{1}{\phi(t)^{3k_0}} \nabla^2 V(c) X$$
where $\nabla^2 V(c)$ is the Hessian matrix of $V$ at $c$. As the potential is homogeneous, multiplying the value of $E$ does not change the variational equation (up to a change of variable), and so we can always choose $E\in\{0,1\}$. The case $E=0$ does not lead to any integrability constraint, and so we will only consider $E=1$ in the rest of the article.

\bigskip

After the variable change $k_0\dot{\phi(t)} \phi(t)^{k_0-1}/\sqrt{2} \longrightarrow t$ and diagonalization of $\nabla^2 V(c)$ (when possible), the first order variational equation becomes
$$\frac{1}{2}(t^2-1) \ddot{X}_i +2t\dot{X}_i -\lambda_i X_i=0, \qquad\quad \lambda_i\in \Sp\left(\nabla^2 V(c)\right)$$
The integrability condition is that the Galois group of this variational equation over the base field of meromorphic functions on the curve $\Gamma$ should have an Abelian identity component. Here the base field after reparametrization is $\mathbb{C}(t,\sqrt{1+t^{-1}})$. This integrability condition on the Galois group leads to a condition on the eigenvalues $\lambda_i$, which is (according to Morales-Ramis in \cite{21} and Combot \cite{40})
$$\Sp\left(\nabla^2 V(c)\right)\subset \left\lbrace\frac{1}{2}(k-1)(k+2),\; k\in\mathbb{N} \right\rbrace=\{-1,0,2,5,9,14,20,27,\dots \}$$

\begin{defi}\label{def23}
Let $V$ be a holomorphic homogeneous potential of degree $-1$ on $\Omega$. Let $c\in\Omega$ be a Darboux point of $V$. We say that \emph{$V$ is integrable at order $k$ at $c$} if the variational equation of order $k$ of the homothetic orbit associated to $c$ has a Galois group whose identity component is Abelian.
\end{defi}

\subsection{Polar coordinates}

Let us first remark that we can always assume that $P_1(q,w)=q_1^2+q_2^2-w_1^2$. Indeed, if $w_1$ is not already a rational fraction on $\mathcal{S}$, we can always consider an algebraic variety $\tilde{\mathcal{S}}$ with a projection on $\mathcal{S}$
$$ \pi:(q_1,q_2,w_1,\dots,w_s) \mapsto (q_1,q_2,w_2,\dots,w_s)\quad \pi(\tilde{\mathcal{S}})=\mathcal{S}$$
In particular, the rational functions on $\tilde{\mathcal{S}}$ are rational functions on $\mathcal{S}$ and rational in $w_1$, the additional algebraic extension. This will of course define a potential $\tilde{V}$ on $\tilde{\mathcal{S}}$ such that
$$\tilde{V}(q_1,q_2,w_1,\dots,w_s)= V(q_1,q_2,w_2,\dots,w_s)$$
So the only consequence of this construction will be that
$$\pi(\Sigma(\tilde{\mathcal{S}}))=\{q_1=\pm iq_2,\; (q,w)\in\mathcal{S}\} \cup \Sigma(\mathcal{S})$$
Still, this will not have consequence in the proofs of Theorems \ref{thmmain1},\ref{thmmain2}, due to the hypothesis $c_1^2+c_2^2\neq 0$.

\bigskip

So in the rest of the article, we will always assume that $P_1(q,w)=q_1^2+q_2^2-w_1^2$. The algebraic extension $w_1$ will be noted for now $r$ (as usual). Let us now define polar coordinates. Recall that the manifold $\mathcal{S}$ has the following homogeneity property
$$\forall \alpha\in\mathbb{C}^*, (q,r,\mathbf{w})\in\mathcal{S} \Rightarrow (\alpha^{k_0}q,\alpha^{k_0}r,\alpha^{k_1}\mathbf{w}_1,\dots,\alpha^{k_s}\mathbf{w}_s)\in\mathcal{S}$$
The $k_i$ are integers related to the homogeneity of the algebraic extensions $\mathbf{w}_i$. Noting
$$r\cos\theta=q_1,\;\;r\sin\theta=q_2$$
and using the homogeneity of $V$, we obtain that $V$ can be written
$$V(q_1,q_2,r,\mathbf{w})=r^{-1} \tilde{U}(\cos \theta,\sin \theta,\tilde{\mathbf{w}})$$
where $\tilde{\mathbf{w}}_i^{k_0}=\mathbf{w}_i/r^{k_i}$ and $\tilde{U}$ is holomorphic for $(\cos \theta,\sin \theta,1,\tilde{\mathbf{w}})\in\Omega$. In polar coordinates, a rotation of a potential $V=r^{-1}\tilde{U}(\cos \theta,\sin \theta,\tilde{\mathbf{w}})$ of an angle $\theta_0\in\mathbb{C}$ is simply the potential $V=r^{-1}\tilde{U}(\cos (\theta+\theta_0),\sin (\theta+\theta_0),\tilde{\mathbf{w}})$ (applying also the rotation to the ideal defining the $\tilde{\mathbf{w}}$).

\bigskip

Let us now consider a point $c\in\Omega$. As $\Omega \cap \Sigma(\mathcal{S})= \emptyset$, the potential $V$ is holomorphic on a neighbourhood of $c$, and thus also on an open neighbourhood $W\subset \Omega$ of
$$\tilde{c}=(c_1/c_3,c_2/c_3,1,c_4/c_3^{k_1/k_0},\dots,c_{s+3}/c_3^{k_s/k_0})$$
(whatever the possible choice of the root $c_3^{k_i/k_0}$). We can write
$$\tilde{U}(\cos \theta,\sin \theta,\tilde{\mathbf{w}})=V(\cos \theta,\sin \theta,1,\mathbf{w})$$
This $\tilde{U}$ is not a function of only $\theta$ as it would be multivalued due to the algebraic extensions $\tilde{\mathbf{w}}$. But we can choose the branch on which $\tilde{c}$ lies. Let us first define the projection
$$\pi: W \mapsto \mathbb{C},\quad \pi(\cos \theta,\sin \theta,\mathbf{w})= \theta$$
The function $\pi$ is injective if the open neighbourhood $W$ is chosen small enough. We can then define the holomorphic function
$$U(\theta)=\tilde{U}(\pi^{-1}(\theta))$$
So, on $W$, we can always write
$$V(q_1,q_2,r,\mathbf{w})=r^{-1} U(\theta),\;\;\; (\cos \theta,\sin \theta,\mathbf{w}) \in W$$
with $U$ holomorphic in $\theta$.

\bigskip

Let us now look in the case of a Darboux point. A Darboux point $c$ of a holomorphic homogeneous potential $V$ of degree $-1$ on $\Omega$ is solution of equation \eqref{darbb}. If $c_1^2+c_2^2\neq 0$ (which is a necessary assumption for dealing with polar coordinates), we can rewrite this equation in polar coordinates with $V=r^{-1} U(\theta)$ on an open neighbourhood of $c$
$$U'(\theta)=0 \quad \alpha r^3=-U(\theta)$$
where $'$ denotes the derivation in $\theta$. So a non-degenerate Darboux point corresponds to some $\theta\in[0,2\pi [$ such that
$$U'(\theta)=0 \quad \hbox{and} \quad U(\theta)\neq 0$$

\subsection{Reduction by rotation}

Given a $2$-dimensional rotation $R_{\theta_0}$ of angle $\theta_0$, the symplectic variable change $p=R_{\theta_0} p,\; q=R_{\theta_0} q$ transforms $H$ into the Hamiltonian of the meromorphic homogeneous potential $V(R_{\theta_0} q)$. So meromorphic integrability of the potential $V(R_{\theta_0} q)$ does not depend on $\theta_0$. Looking at the Hamiltonian flow $X_H$, by making a time change we can replace the potential $V$ by $\gamma V$ with $\gamma\in \mathbb{C}^*$ (we will call this transformation a dilatation), like Maciejewski-Przyzbylska in \cite{3}. So meromorphic integrability of the potential $\gamma V$ does not depend on $\gamma\in\mathbb{C}^*$.\\

Assume that $V$ has a non-degenerate Darboux point $c$; we can assume that $c$ is of the form $c=(1,0,\dots)$ and has multiplier $-1$, which corresponds in polar coordinates to $U'(\theta)=0,\; U(\theta)=1$.

\begin{lem}
Let $V$ be a holomorphic homogeneous potential of degree $-1$ on $\Omega$. Assume that $V$ admits a non-degenerate Darboux point $c\in\Omega$. Then after a rotation and dilatation, we can assume that the potential $V$ has the following properties
\begin{itemize}
\item There exists a point of the form $c=(1,0,\dots)$ which is a non-degenerate Darboux point of $V$ with multiplier $-1$.
\item We have $\Sp(\nabla^2 V(c))=\{2,\lambda\}$, and the series expansion in $q$ of $V$ at $c$ is of the form
$$V(c+q)=1-q_1+q_1^2+\lambda q_2^2/2 +O(q^3)$$
\end{itemize}
\end{lem}

\begin{proof}
As there exists a non-degenerate Darboux point $c\in\Omega$, we can assume that $c$ is of the form $c=(1,0,\dots)$ after a rotation (recall that $c_3^2=c_1^2+c_2^2\neq 0$ on $\Omega$). Multiplying $V$ by a constant, we can assume that $V(c)=1$ (recall that $V(c)\neq 0$ as $c$ is non degenerate). Using Euler formula, we obtain $\partial_{q_1}V(c)=-V(c)$ and so the multiplier of $c$ is $-1$.

Differentiating the Euler relation and evaluating it at $(q_1,q_2)=(c_1,c_2)$, we also have
$$\partial_{q_1}V(c)+\partial_{q_1q_1}V(c)=-\partial_{q_1}V(c),\qquad \partial_{q_1q_2}V(c)+\partial_{q_2}V(c)=-\partial_{q_2}V(c) $$
Thus
$$\nabla^2 V(c) (c_1,c_2)= \left(\begin{array}{c}\partial_{q_1q_1}V(c)\\ \partial_{q_1q_2}V(c)  \end{array} \right)= \left(\begin{array}{c}-2\partial_{q_1}V(c)\\ -2\partial_{q_2}V(c)  \end{array} \right)=2\left(\begin{array}{c} c_1\\ c_2  \end{array} \right)$$
So the eigenvalue $2$ always appear in the spectrum and $\Sp(\nabla^2 V(c))=\{2,\lambda\}$. The series expansion of $V$ at $c$ follows.
\end{proof}

\subsection{Example}

$$U(\theta)=(1-\cos(\theta))^n-\frac{n2^n}{(2k-1)(k+1)+1}-2^n\qquad n,k\in\mathbb{N}^*$$
The Darboux points of the potential $V=r^{-1} U(\theta)$ correspond to $\theta=0,\pi$. Computing the eigenvalues at these Darboux points gives respectively the following spectrum of Hessian matrices
$$\{2,-1\} \quad \{2,(2k-1)(k+1)\}$$
These eigenvalues are allowed for meromorphic integrability using Theorem \ref{thmmorales} and according to \cite{18}, there are no additional integrability conditions at order $2$. So this potential is integrable at order $2$ near all Darboux points. Moreover, looking at $\theta=0$, we find that
$$U^{(i)}(0)=0 \quad i=1\dots 2n-1$$
This implies that the variational equation of order $2n-2$ of the potential $V=r^{-1} U(\theta)$ is the same as the variational equation of order $2n-2$ of the potential $\tilde{V}=r^{-1}$. This potential $\tilde{V}$ is meromorphically integrable with an additional first integral $p_1q_2-p_2q_1$ and thus its variational equation of order $2n-2$ has a Galois group whose identity component is Abelian. So the potential $V$ is integrable at order $2n-2$ at $\theta=0$.
At $\theta=\pi$, the potential $V$ is probably not integrable at order $3$ but it seems quite difficult to prove as the eigenvalue depend on the parameter $k$ which make the higher variational equation very complicated (this problem is analysed in \cite{39}).

We could also use the procedure presented in \cite{2,3} where Maciejewski-Przybylska classify meromorphically homogeneous potentials of degree $3,4$, but in the case of $V$ this will not work because this method only works for potentials without multiple Darboux points (here the Darboux point corresponding to $\theta=0$ is multiple for $n\geq 2$). In section $5$, we will prove that the potential $V$ is not integrable at order $4n-3$ at $\theta=0$.

\section{Theorem \ref{thmmain2}  implies Theorem \ref{thmmain1}}\label{secTheo}

\begin{lem}\label{thm3}
Let $V$ be a real analytic potential on $\mathbb{R}^2\setminus \{0\}$, homogeneous of degre $-1$. Then $V$ can be written in polar coordinates under the form $r^{-1}U(\theta)$ with $U$ $2\pi$-periodic real analytic, and there exists $\theta_0$ such that
$$U(\theta_0)\neq 0 \quad U'(\theta_0)=0 \quad \frac{U''(\theta_0)}{U(\theta_0)} \leq 0$$
\end{lem}

\begin{proof}
As $V$ is real analytic on $\mathbb{R}^2\setminus \{0\}$, it is also real analytic on the unit circle. Using homogeneity, we then have $V(q_1,q_2)=r^{-1} U(\theta)$. The function $U$ is thus real analytic $2\pi$-periodic. We have that $U(\mathbb{R})\subset \mathbb{R}$ and so $U$ is $C^\infty$ on $\mathbb{R}$. The function $U$ is periodic, so there exists a minimum and a maximum for $U$. Assume first that $U$ is not constant. Then $\max U > \min U$. We have $3$ cases
\begin{itemize}
\item $\max U \geq \min U \geq 0$. Then we choose $\theta_0$ such that $U(\theta_0)= \max U$
\item $\max U \geq 0 \geq \min U$. If $\max U\neq 0$, we choose $\theta_0$ such that $U(\theta_0)= \max U$, otherwise we choose $\theta_0$ such that $U(\theta_0)= \min U$
\item $0 \geq \max U \geq \min U$. We choose then $\theta_0$ such that $U(\theta_0)= \min U$
\end{itemize}
Knowing that $\max U > \min U$, we get $U(\theta_0) \neq 0$. Then in all cases, we have
$$\frac{U''(\theta_0)}{U(\theta_0)} \leq 0$$
Knowing that $\theta_0$ is an extremum, we get
$$U(\theta_0)\neq 0 \quad U'(\theta_0)=0 \quad \frac{U''(\theta_0)}{U(\theta_0)} \leq 0$$
which gives the Lemma.
\end{proof}

Let us now prove Theorem~\ref{thmmain1}, assuming Theorem~\ref{thmmain2}.
\begin{proof}[Proof of Theorem \ref{thmmain1}]
We assume that Theorem \ref{thmmain2} holds. As $V$ is real analytic on $\mathbb{R}^2\setminus \{0\}$, then $V$ is holomorphic over a neighbourhood of $\mathbb{R}^2\setminus \{0\}$ in $\mathbb{C}^2$, noted $W$. As $V$ is homogeneous, we can assume that $W$ is invariant by dilatation $q \mapsto \alpha q$.

Let us note $\Omega$ the open set of $\mathcal{S}=\{(q_1,q_2,r)\in\mathbb{C}^3,\;\;r^2=q_1^2+q_2^2\}$ such that
$$\Omega=\{(q,r),q\in W, (q,r)\in\mathcal{S},q_1^2+q_2^2\neq 0\}$$
The set $\Omega$ satisfies the conditions of Theorem \ref{thmmain2}, and $V$ is a holomorphic potential on $\Omega$. We can moreover write $V=r^{-1}U(\theta)$ in polar coordinates with $U$ real analytic and we use Lemma~\ref{thm3}. There exists a $\theta_0\in\mathbb{R}$ such that
$$U(\theta_0)\neq 0 \quad U'(\theta_0)=0 \quad \frac{U''(\theta_0)}{U(\theta_0)} \leq 0$$
We consider the point in $\Omega$
$$c_1 = U(\theta_0) \cos\theta_0, \quad  c_2 = U(\theta_0) \sin\theta_0, \quad r=U(\theta_0)$$
After computation, we find that $c$ satisfies the equation
$$\partial_{q_1} V(c)=-c_1\qquad \partial_{q_2} V(c)=-c_2$$
So $c$ is a Darboux point of $V$ with multiplier $-1$. We now write
$$V=r^k U\left(\hbox{arctan}\left(\frac{q_1+iq_2}{r}\right)\right)$$
We compute the Hessian matrix, evaluate at $c$, and we find for the spectrum
$$\Sp(\nabla^2V(c))=\left\lbrace 2,\frac{U''(\theta_0)}{U(\theta_0)}-1 \right\rbrace $$
If $V$ is meromorphically integrable then, thanks to Theorem~\ref{thmmorales}, the eigenvalues at Darboux points should belong to
$$\left\lbrace\frac{(p-1)(p+2)}{2},\;p\in\mathbb{N} \right\rbrace =\{-1,0,2,5,9,14,\dots \}$$
But here we have moreover that
$$\frac{U''(\theta_0)}{U(\theta_0)} \leq 0$$
So this implies that in fact
$$\Sp(\nabla^2 V(c))=\{2,-1\}$$

To conclude, the holomorphic potential on $\Omega$ satisfies all hypotheses of Theorem \ref{thmmain2}, including the spectrum condition. Among the $3$ possibles families, only the second one has the eigenvalue $-1$. Using the fact that $U$ should be a real function (and non zero), this implies that $V=a r^{-1},\; a\in\mathbb{R}^*$.
\end{proof}

Remark that the meaning of meromorphic integrability depends on the open set $\Omega$ chosen. We can choose an dilatation invariant open set $\Omega$ arbitrary small. So we have proved the non-existence of an additional first integral (outside the case $V=ar^{-1}$) which is meromorphic on $\mathbb{C}^2\times \Omega$ with $\Omega$ an arbitrary small dilatation-invariant neighbourhood of $\mathbb{R}^2\setminus \{0\}$.

%
%

\section{Non degeneracy of higher variational equations}\label{secNon}

The purpose of this section is to prove Theorem \ref{thmmain2} for the eigenvalues $0,2$. The strategy is the following
\begin{itemize}
\item We consider a potential $V$ which is integrable at a Darboux point $c=(1,0,\dots)$ at order $k-1$.
\item We rewrite the $k$-th order variational equation, and we prove that the constraints of being integrable at order $k$ are affine equations in the $k+1$-th derivative in $q_2$ of $V$.
\item If these affine functions are not constant, then there is at most one possibility for the $k+1$-th derivative in $q_2$ of $V$ for $V$ being integrable at order $k$. This property is called ``non degeneracy'' (see Definition \ref{def3}). This implies that an integrable potential is uniquely determined by its $k$-th order series expansion at $c$.
\item We then prove that if $V$ has eigenvalue $0$, the non-degeneracy property holds for $k\geq 2$, thus proving that $V=1/q_1$ is the only integrable potential in this case (section \ref{eigen0}).
\item We prove also that if $V$ has eigenvalue $2$, the non-degeneracy property holds for $k\geq 3$. This implies that an integrable potential with eigenvalue $2$ is uniquely determined by its third order series expansion at $c$. For each third order series expansion at $c$, we find an integrable potential $V$ having such a series expansion at $c$, thus proving there are no other integrable potentials of this type (section \ref{eigen2}).
\end{itemize}

\subsection{First order variational equations}

At this point, we proved that after reduction, a meromorphic homogeneous potential on $\Omega$ possessing a non-degenerate Darboux point, can be assumed to have the following properties
\begin{itemize}
\item There exists a point of the form $c=(1,0,\dots)$ which is a Darboux point of $V$ with multiplier $-1$ and $V(c)=1$.
\item The spectrum of the Hessian matrix of $V$ at $c$ is of the form $\Sp(\nabla^2 V(c))=\{2,\lambda\}$.
\item The first order variational equation near a homothetic orbit with energy $E=1$ is given by (after variable change $k_0\dot{\phi}\phi^{k_0-1}/\sqrt{2} \longrightarrow t$) 
\begin{equation}\label{eq1}
\frac{1}{2}(t^2-1) \ddot{y} +2t\dot{y} -\frac{1}{2}(n-1)(n+2) y=0 \qquad n\in\mathbb{N}
\end{equation}
\item If the first order variational equation has a Galois group whose identity component is Abelian, then
$$\lambda\in\{\textstyle{\frac{1}{2}}(n-1)(n+2),\; n\in\mathbb{N}\}$$
\end{itemize}

We now recall some properties of the solutions of the first order variational equation \eqref{eq1}. A basis of solutions can be written using a hypergeometric function ${}_2 F_1$
$${}_2 F_1(1+(n/2, 1/2-n/2,1/2,t^2),\;\; {}_2 F_1(1-n/2,3/2+n/2,3/2,t^2)t$$
These solutions can also be rewritten in terms of Legendre functions
$$(t^2-1)^{-3/4}L_P\left(\frac{1}{2},\frac{1}{2}\sqrt{8\lambda+9},\frac{t}{\sqrt{t^2-1}}\right),(t^2-1)^{-3/4}L_Q\left(\frac{1}{2},\frac{1}{2}\sqrt{8\lambda+9},\frac{t}{\sqrt{t^2-1}}\right)$$
Several properties on hypergeometric functions and their specializations can be found in \cite{42,43}.
For all $n\in\mathbb{N}^*$, one of these two functions is a polynomial as the hypergeometric series is finite. So we can build a basis of solutions given by $(P_n,Q_n)$ where $P_n$ is polynomial in $t$ of degree $n-1$ (which are related to Gegenbauer polynomials, as given in equation 158 of \cite{43}) and $Q_n$ is given by
$$Q_n(t)=P_n(t)\int \frac{1}{(t^2-1)^2P_n(t)^2} dt$$
The functions $Q_n$ are multivalued. The case $n=0$ will be a special case (see \cite{18}), as the Galois group of \eqref{eq1} will be $\{Id\}$ instead of $\mathbb{C}$. These properties are analogous to the ones for the Legendre polynomials in \cite{42}. These can also be reproved using the holonomic package of Mathematica in \cite{36}.

\medskip
The polynomials $P_n$ can be computed using the ``Rodrigues'' type formula
$$P_n(t)=\frac{1}{t^2-1}\frac{\partial^{n-1}}{\partial t^{n-1}} (t^2-1)^n$$
which gives a normalization for the leading term of $P_n$ that we will choose now and the functions $Q_n$ can be written as
$$Q_n(t)=\epsilon_n P_n(t) \arctanh\left(\frac{1}{t}\right)+\frac{W_n(t)}{t^2-1}$$
with $W_n$ being polynomials, and $\epsilon_n$ a real sequence given by
$$\epsilon_n=\frac{4^{-n}n(n+1)}{n!^2}$$

\begin{lem}\label{lemnonint} 
Let $F\in\mathbb{C}(z_1)\left[z_2 \right]$ and
$$f(t)=F\left(t,\arctanh\left(\frac{1}{t}\right)\right) \in \mathbb{C}(t)\left[\arctanh\left(\frac{1}{t}\right)\right]$$
We consider the following differential field extension and its differential Galois group
$$K=\mathbb{C}\left(t,\arctanh\left(\frac{1}{t}\right),\int f dt \right), \qquad G=\hbox{Gal}_{\hbox{diff}}(K/\mathbb{C}(t))$$
If $G$ is Abelian, then
$$\frac{\partial}{\partial \alpha}  \underset{{t=\infty}}{\hbox{Res}}\;\; F\left(t,\arctanh\left(\frac{1}{t}\right)+\alpha\right) =0\quad \forall \alpha\in\mathbb{C}$$
where Res corresponds to the residue.
\end{lem}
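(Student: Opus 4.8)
The plan is to run a monodromy argument. All the singular points of the extension $K/\mathbb{C}(t)$ are regular, so (as recalled above for equation~\eqref{eqvar1}) $G$ is the Zariski closure of the monodromy group, and ``$G$ abelian'' forces the analytic continuations around the various singular points to commute. Write $w=\arctanh(1/t)$, so that $w'=1/(1-t^2)$ and $K=\mathbb{C}(t,w,h)$ with $h=\int f\,dt$, $f=F(t,w)$. Since $w=\tfrac12\log\frac{t+1}{t-1}$, its monodromy is unipotent: continuation around $t=1$ sends $w\mapsto w-\pi i$, around $t=-1$ sends $w\mapsto w+\pi i$, while around $t=\infty$ it is single valued, $w\mapsto w$. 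Let $\gamma$ denote continuation around $t=-1$ and $\gamma_\infty$ continuation around the large circle; then $\gamma^k\in G$ acts by $\gamma^k(w)=w+k\pi i$, and $\gamma_\infty(w)=w$.

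First I would record that
$$\rho(\alpha):=\operatorname{Res}_{t=\infty}F\!\left(t,w+\alpha\right)\,dt$$
is a polynomial in $\alpha$ of degree at most $\deg_{z_2}F$: expanding $F(t,w+\alpha)=\sum_j c_j(t)(w+\alpha)^j$ and using that $w$ is single valued with a Laurent expansion in $1/t$ at infinity, each $\operatorname{Res}_{t=\infty}c_j(t)w^i\,dt$ is a constant and $\rho$ is their evident polynomial combination in $\alpha$. Since the asserted conclusion is $\rho'\equiv0$, i.e. that $\rho$ is constant, it suffices to prove $\rho(k\pi i)=\rho(0)$ for all $k\in\mathbb{Z}$: a polynomial agreeing with a constant at infinitely many points is constant.

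The core of the argument is to compute, for $\sigma\in G$ with $w$-shift $a$, the number
$$m(\sigma):=\gamma_\infty(\sigma h)-\sigma h\in\mathbb{C}$$
in two ways (it lies in $\mathbb{C}$ because its $t$-derivative $\gamma_\infty F(t,w+a)-F(t,w+a)=0$ vanishes). On one hand $m(\sigma)$ is precisely the monodromy of the continued integral $\sigma h$ around infinity: as $(\sigma h)'=F(t,w+a)$ and $w+a$ is single valued around $\infty$, continuation adds the period $\oint_\infty F(t,w+a)\,dt=-2\pi i\,\rho(a)$; thus $m(\mathrm{id})=-2\pi i\,\rho(0)$ and $m(\gamma^k)=-2\pi i\,\rho(k\pi i)$. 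On the other hand, if $G$ is abelian then $\gamma_\infty\sigma=\sigma\gamma_\infty$, so
$$m(\sigma)=\gamma_\infty(\sigma h)-\sigma h=\sigma\big(\gamma_\infty h\big)-\sigma h=\sigma\big(\gamma_\infty h-h\big)=\sigma\big(m(\mathrm{id})\big)=m(\mathrm{id}),$$
the last equality because $m(\mathrm{id})$ is a constant and is fixed by the differential automorphism $\sigma$. Comparing the two evaluations at $\sigma=\gamma^k$ gives $\rho(k\pi i)=\rho(0)$ for every $k\in\mathbb{Z}$, and the polynomiality step above finishes the proof.

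The step I expect to be the most delicate is the identification $m(\sigma)=-2\pi i\,\rho(a)$: one must track that the shift $a=k\pi i$ is produced by the monodromy of $w$ at the finite points $\pm1$, whereas around infinity $w$ carries no monodromy, so on the sheet reached after applying $\gamma^k$ the period around infinity is genuinely $\rho(k\pi i)$ and not $\rho(0)$. In other words, the whole content of the lemma is that the $\sigma$-dependence of this period — the obstruction measured by the commutator $[\sigma,\gamma_\infty]$ acting on $h$ — must vanish once $G$ is abelian. A minor point to verify carefully is that continuation of $h$ around infinity produces exactly the residue at infinity of the single-valued function $F(t,w)$, with no spurious contribution from the branch points $\pm1$; this holds because $w$ returns to itself along the large circle, so $F(t,w)$ is single valued there and admits a genuine Laurent expansion at $\infty$.
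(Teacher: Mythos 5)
Your proof is correct and follows essentially the same route as the paper's: a monodromy--commutator argument playing the loop around $t=\infty$ (which picks up the residue, i.e.\ the coefficient of $\ln t$ in $\int f\,dt$) against monodromy that shifts $\arctanh\left(\frac{1}{t}\right)$ by constants, combined with the observation that the residue is a polynomial in the shift, so that agreement at infinitely many integer shifts forces constancy. The only difference is cosmetic: you iterate a genuine loop around $t=-1$ to get shifts $k\pi i$, $k\in\mathbb{Z}$, whereas the paper uses a formal complex power $\sigma_1^{\beta/2i\pi}$ of the figure-eight path — if anything your variant is the cleaner one, since it stays inside the actual monodromy group.
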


\begin{proof}
We first remark that the Zariski closure of the monodromy group of $f$ in the complex plane $\mathbb{C}\setminus \{-1,1\}$ is exactly the Galois group $G$ because $f$ satisfies a linear differential equation whose singularities are regular. We now consider two paths: the ``eight'' path~$\sigma_1$ around the singularities $-1$ and $1$, and the path~$\sigma_2$ around infinity. At infinity, the function $F\left(t,\arctanh\left(\frac{1}{t}\right)+\alpha\right)$ will have a series expansion of the kind
$$\int F\left(t,\arctanh\left(\frac{1}{t}\right)+\alpha\right) dt=\sum\limits_{n=n_0}^{\infty} a_n(\alpha)t^n+r(\alpha)\ln\; t$$
because the function $\arctanh\left(\frac{1}{t}\right)$ has a regular point at infinity. Let us now consider the monodromy commutator
$$\sigma_l=[\sigma_2,\sigma_1^l]=\sigma_2^{-1}\sigma_1^{-l}\sigma_2\sigma_1^{l} \quad \hbox{with} \quad l\in \mathbb{Z}$$
Computing the monodromy, we obtain $\sigma_1^{l}(f)=F\left(t,\arctanh\left(\frac{1}{t}\right)+2i\pi l\right)$ and $\sigma_2(\ln t)=\ln t +2i\pi$. We deduce that
$$\sigma_l(f)=f+r(2i\pi l)-r(0)$$
This $r(2i\pi l)$ corresponds to the residue of $F\left(t,\arctanh\left(\frac{1}{t}\right)+2i\pi l\right)$ at infinity. If $G$ is Abelian, then the monodromy is commutative, and then the commutator $\sigma_l$ should act trivially on $f$. This is the case only if $r(2i\pi l)=r(0)\;\;\forall l \in \mathbb{Z}$. The function $r$ is a polynomial in $l$, so $r(2i\pi l)-r(0),\;\;\forall l\in\mathbb{C}$. This gives us the formula of the lemma
$$\frac{\partial}{\partial \alpha}  \underset{{t=\infty}}{\hbox{Res}}\;\; F\left(t,\arctanh\left(\frac{1}{t}\right)+\alpha\right) =0\quad \forall \alpha\in\mathbb{C}$$
\end{proof}

\subsection{Higher order variational equations}\label{secHig}

In the particular case of a Hamiltonian system coming from a potential with $2$ degrees of freedom, we will be able to rewrite the higher order variational equations in a simpler. We also assume that $V$ has a non degenerated Darboux point at $c=(1,0)$ with multiplier $-1$.

Looking at variational equation of order $k$ of Morales-Ramis-Simo \cite{19} page 860 (section \ref{Int}), we see that the last equation always has the following structure. There is a homogeneous part $\dot{\varphi}_t^{(k)}=X_H^{(1)}\varphi_t^{(k)}$, and non homogeneous terms involving functions already computed when solving lower order variational equations. So this last equation can be considered as a non homogeneous linear  equation.

The $X_H$ is the Hamiltonian field, and we may write $\varphi_t^{(k)}=(Y_1,Y_2,X_1,X_2)$ (we are in dimension $4$). The $X_1$ corresponds to a perturbation tangential to the homothetic orbit, and $X_2$ normal to this orbit (and $Y_1,Y_2$ are the velocities in these directions). We see also that this variational equation is not linear. But for example at order $3$, instead of considering non linear terms like $(\varphi_t^{(1)})^3$, we replace it by solutions of the symmetric power of the equation satisfied by $\varphi_t^{(1)}$ (for this term, this gives the third symmetric power of the first order variational equation, see \cite{41}).

Computing variational equations up to order $k$ will produce monomials in the components of vectors $\varphi_t^{(1)},\dots,\varphi_t^{(k)}$. Equation $(13)$ of \cite{19} can be rewritten
$$\dot{\varphi}_t^{(k)}= \sum\limits_{j=1}^k \sum \frac{j!}{m_1! \dots m_s!} X_H^{(j)}((\varphi_t^{(i_1)})^{m_1},\dots,(\varphi_t^{(i_s)})^{m_s})$$
For each fixed $j$, the inner sum is a sum monomials of the form
\begin{equation}\label{mono}
(\varphi_t^{(1)})_{w_1}^{j_1}\dots (\varphi_t^{(k)})_{w_k}^{j_k}
\end{equation}
where $w$ indicates the component of vectors $\varphi_t$. Instead of computing $\varphi_t^{(i)}$, we compute directly these monomials. We note $y_{n_1,n_2,n_3,n_4}$ the sum over all monomials \eqref{mono} having exactly $n_1$ terms with $w=1$, $n_2$ terms with $w=2$, etc. Due to symmetries of higher variational equations, considering these $y_{n_1,n_2,n_3,n_4}$ are sufficient to analyse the variational equation (meaning that the derivatives of $y$ only involve $y$). This process has also linearised the variational equation as $y_{n_1,n_2,n_3,n_4}$ corresponds to the monomials in the sum themselves. Building linear differential equations for the $y_{n_1,n_2,n_3,n_4}$ necessitates to compute the symmetric product of differential systems (as done in \cite{38}), as we need to build linear differential system satisfied by monomials of the form \eqref{mono}. At order $k$, the variational equation now writes (the last equation)
\begin{equation}\label{nondeg}
\left(\begin{array}{c}\ddot{y}_{0,0,1,0}\\ \ddot{y}_{0,0,0,1} \end{array}\right)=
\frac{1}{\phi^{3k_0}} \left(\begin{array}{c}2y_{0,0,1,0}\\ \lambda y_{0,0,0,1} \end{array}\right) +
\left(\begin{array}{c}
\sum \limits_{i=2}^{k} \frac{1}{\phi^{(i+2)k_0}}\sum\limits_{j=0}^i \frac{d_{i,j}}{(i-j)!j!} y_{0,0,i-j,j}\\ \sum \limits_{i=2}^{k} \frac{1}{\phi^{(i+2)k_0}}\sum\limits_{j=0}^i \frac{d_{i,j+1}}{(i-j)!j!} y_{0,0,i-j,j} \\ \end{array}\right)
\end{equation}
where $y_{i,0,j,0}$ satisfy differential equations corresponding to lower order variational equations. The coefficients $d_{i,j}$ are given by
$$d_{i,j}=\frac{\partial^{i+1}}{\partial q_1^{i-j+1} \partial q_2^{j}} V(c)$$

A visual process to build these differential systems is to see $y_{n_1,n_2,n_3,n_4}$ as $\dot{X_1}^{n_1}\dot{X_2}^{n_2} X_1^{n_3} X_2^{n_4}$. We differentiate this expression and simplify it using the relation
\begin{equation}\label{nondeg2}
\ddot{X}=\frac{1}{\phi^{3k_0}} \left(\begin{array}{cc}2&0\\ 0&\lambda \end{array}\right)  X +\left(\begin{array}{c}
\sum \limits_{i=2}^{k} \frac{1}{\phi^{(i+2)k_0}}\sum\limits_{j=0}^i \frac{d_{i,j}}{(i-j)!j!} X_1^{i-j}X_2^{j}\\ \sum \limits_{i=2}^{k} \frac{1}{\phi^{(i+2)k_0}}\sum\limits_{j=0}^i \frac{d_{i,j+1}}{(i-j)!j!} X_1^{i-j}X_2^{j} \\ \end{array}\right)
\end{equation}
We suppress terms degree $>k$ that could appear, and then we formally replace back the $\dot{X_1}^{n_1}\dot{X_2}^{n_2} X_1^{n_3} X_2^{n_4}$ by $y_{n_1,n_2,n_3,n_4}$.

\begin{rem}\label{remhom}
Using the Euler relation for homogeneous function
$$q_1 \partial_{q_1}V+q_2 \partial_{q_2}V=-V$$
and derivating it in $q_1$ or $q_2$ enough times at $(q_1,q_2)=(1,0)$, we obtain the relations
$$\partial_{q_1^i q_2^j}V + \partial_{q_1^i q_2^j}V +\partial_{q_1^{i+1} q_2^j}V=-\partial_{q_1^i q_2^j}V,\quad i\geq 1,j\geq 0$$
This gives all derivatives $d_{k,j}$ of $V$ of order $k+1$ as functions of lower order ones except $d_{k,k+1}$.
\end{rem}

By construction, the differential equations for the $y_{n_1,n_2,n_3,n_4}$ have a special structure. In particular, the expression of $\dot{y}_{n_1,n_2,n_3,n_4}$ only involves terms with higher or equal sum of indexes. Thus, in particular, the differential equation for $y_{n_1,n_2,n_3,n_4},\;n_1+n_2+n_3+n_4=k$ is linear homogeneous and correspond to the $k$-th symmetric power of the first order variational equation. So the $y_{n_1,n_2,n_3,n_4},\;n_1+n_2+n_3+n_4=k$ are linear combinations of product of degree $k$ of solutions of the first order variational equation, which will be in our case after a change of variable products of solutions of the first variational equations $P,Q$.

Let us now look at equation \eqref{nondeg}. This is a non homogeneous linear equation, so once we have found the expression of the non homogeneous term, we can solve it using variation of parameters. Remark also that the highest order derivatives $d_{k,k+1}$ of $V$ at $c$ only appears in this equation and not in the lower order ones. We write the solution of the second equation of~\eqref{nondeg} (after the variable change $k_0\dot{\phi}\phi^{k_0-1}/\sqrt{2} \longrightarrow t$)
\begin{equation}\label{solnon}
y(t)=y_{hom}(t)+y_{part_1}(t)+d_{k,k+1} y_{part_2}(t)
\end{equation}
isolating the term in $d_{k,k+1}$. The part $y_{hom}(t)$ is a solution of the homogeneous part, the solution $y_{part_1}(t)$ is a particular solution of equation \eqref{nondeg} without the term in $d_{k,k+1}$ and $y_{part_2}(t)$ is a particular solution of equation \eqref{nondeg} where all non homogeneous terms are removed except the one in in $d_{k,k+1}$. Let us apply a monodromy commutator
$$\sigma_{l}=[\sigma_2,\sigma_1^l]=\sigma_2^{-1}\sigma_1^{-l}\sigma_2\sigma_1^{l}$$
(with the same notation as in the proof of Lemma \ref{lemnonint}). This gives
$$\sigma_{l}(y)=\sigma_{l}(y_{hom})+\sigma_{l}(y_{part_1})+d_{k,k+1}\sigma_{l}(y_{part_2})$$
Now let us look at $y_{part_2}$. If $\lambda=\textstyle{\frac{1}{2}} (n-1)(n+2),\;n\in\mathbb{N}$, it can be computed and one solution is
$$y_{part_2}(t)=\int (t^2-1)^k Q_n^{k+1} dt$$
We now apply lemma \ref{lemnonint} which says that the monodromy element $\sigma_l$ add to such function the constant
$$G(\alpha)=\underset{{t=\infty}}{\hbox{Res}}\;\;(t^2-1)^k(Q_n+\epsilon_n\alpha P_n)^{k+1}-\underset{{t=\infty}}{\hbox{Res}}\;\;(t^2-1)^kQ_n^{k+1}$$
The equation $\sigma_{l}(y)=y$ becomes
\begin{align*}
\sigma_{l}(y)-y=\\
\sigma_{l}(y_{hom})+\sigma_{l}(y_{part_1})+d_{k,k+1}\sigma_{l}(y_{part_2})-y_{hom}-y_{part_1}-d_{k,k+1} y_{part_2}=\\
\sigma_{l}(y_{hom})+\sigma_{l}(y_{part_1})-y_{hom}-y_{part_1}+d_{k,k+1}(\sigma_{l}(y_{part_2})-y_{part_2})=\\
\sigma_{l}(y_{hom})+\sigma_{l}(y_{part_1})-y_{hom}-y_{part_1}+d_{k,k+1}G(2i\pi l)=0\\
\end{align*}
This equality is valid for any $l\in\mathbb{Z}$. This produces a system of affine equations in $d_{k,k+1}$. If the function $G(2i\pi l)$ is not zero, then this system of equations has at most one solution in $d_{k,k+1}$. This motivates the following definition

\begin{defi}\label{def3}
Let $V$ be a holomorphic homogeneous potential on $\Omega$ of degree $-1$ admitting a Darboux point of the form $c=(1,0,\dots)$ with multiplier $-1$. We note $\Sp(\nabla^2 V(c))=\{2,1/2(n-1)(n+2)\}$. Let $k\in\mathbb{N}^*$ be fixed and $(VE_k)$ the $k$-th order variational equation near the homothetic orbit associated to $c$. We assume $(VE_{k-1})$ integrable, so the identity component of the Galois group of $(VE_{k-1})$ is Abelian. We say that \emph{the integrability constraint of $(VE_k)$ is non degenerate} if
$$\frac{\partial}{\partial \alpha} \underset{{t=\infty}}{\hbox{Res}}\;\;(t^2-1)^k(Q_n+\epsilon_n\alpha P_n)^{k+1}\neq 0$$
\end{defi}

In other words, the $k$-th variational equation is seen as a system of differential equations depending on parameters. We search to understand how the Galois group of $(VE_k)$ varies with respect to the parameters. The parameter $d_{k,k+1}$ has a very special property, as it appears only one time in the system and the solutions of the system are affine functions in $d_{k,k+1}$. If the above derivative is non zero, we have that the Galois group depends explicitly on $d_{k,k+1}$, and that there is at most one value of $d_{k,k+1}$ such that the Galois group of $(VE_k)$ is Abelian.

\subsection{A rigidity result}

We will now prove that non-degeneracy implies an important rigidity property. If we take two integrable potentials ``close'' enough (meaning that enough derivatives on some Darboux point are equal), then they should be equal as proved in Lemma \ref{thm4} below.

\begin{lem}\label{thm4}
Let $V_1,V_2$ be two integrable holomorphic homogeneous potentials on $\Omega$ of degree $-1$ with a Darboux point of the form $c=(1,0,\dots)\in\Omega$ with multiplier $-1$. Assume there exists $k_0\geq 2$ such that integrability constraint of $(VE_k)$ is non degenerate $\forall k\geq k_0$. If
$$\frac{\partial^{i+j}}{\partial q_1^i\partial q_2^j} V_1(c)=\frac{\partial^{i+j}}{\partial q_1^i\partial q_2^j} V_2(c) \quad \forall (i,j) \hbox{ such that } i+j\leq k_0$$
then $V_1=V_2$.
\end{lem}

\begin{proof}
We prove this by induction. Assume that all derivatives of $V_1$ and $V_2$ are equal up to order $k \geq k_0$. Let us prove that the derivatives of order $k+1$ coincide. Using remark \ref{remhom} p \pageref{remhom}, we already know that they coincide except maybe the derivatives
$$d^{(1)}_{k,k+1}=\frac{\partial^{k+1}}{\partial q_2^{k+1}} V_1(c) \qquad d^{(2)}_{k,k+1}=\frac{\partial^{k+1}}{\partial q_2^{k+1}} V_2(c) $$
At first order, the variational equation near the homothetic orbit associated to $c$ has a Galois group whose identity component is Abelian (for $V_1$ and $V_2$). The eigenvalue $\lambda=\textstyle{\frac{1}{2}} (n-1)(n+2)$ is the same for $V_1,V_2$ as they coincide at least up to order $2$. We can then write a solution $X$ of the variational equation \eqref{solnon} at order $k$ under the form as in section \ref{secHig} page \pageref{secHig}
\begin{align}\label{mondro}
\sigma_{\alpha}(y^{(1)})=\sigma_{\alpha}(y_{hom})+\sigma_{\alpha}(y_{part_1})+d^{(1)}_{k,k+1}\sigma_{\alpha}(y_{part_2})\\
\sigma_{\alpha}(y^{(2)})=\sigma_{\alpha}(y_{hom})+\sigma_{\alpha}(y_{part_1})+d^{(2)}_{k,k+1}\sigma_{\alpha}(y_{part_2})
\end{align}
for respectively $V_1,V_2$ with
$$y_{part_2}(t)=\int (t^2-1)^k Q_n^{k+1} dt$$
The parts $y_{hom}$ and $y_{part_1}$ can be chosen to be equal as all derivatives of $V_1$ and $V_2$ are equal up to order $k$, and $y_{part_2}(t)$ can be chosen the same as the two potentials $V_1,V_2$ have the same eigenvalue $\lambda$. As $V_1,V_2$ are both meromorphically integrable, applying the monodromy commutator as before we should obtain
$$\sigma_{l}(y^{(i)})-y^{(i)}=0 ,\;\;l\in \mathbb{Z},\;\;i=1,2$$
Subtracting these two relations, we get
$$(d^{(1)}_{k,k+1}-d^{(2)}_{k,k+1}) G(2i\pi l)=0$$
Now as the integrability constraint of $(VE_{k})$ is non degenerate, we have $G(2i\pi l)\neq 0$ and thus $d^{(1)}_{k,k+1}=d^{(2)}_{k,k+1}$. Thus all derivatives of $V_1,V_2$ at $c$ up to order $k+1$ coincide. Knowing that $V_1$ and $V_2$ are holomorphic on $\Omega$, then they are equal.
\end{proof}

Lemma \ref{thm4} allows to prove uniqueness theorems: if the non degeneracy condition of Definition \ref{def3} is satisfied, then a meromorphically integrable potential is completely determined using its derivatives $c$ up to order $k$. So we now need to look in the literature for meromorphically integrable homogeneous potentials of degree $-1$ with a Darboux point of the form $c=(1,0,\dots)$ with multiplier $-1$. The space of series expansion of order $k$ of homogeneous potentials of degree $-1$ at $c$ and a fixed eigenvalue $\lambda$ is an affine space $\mathcal{E}$. If all series expansions in $\mathcal{E}$ are series expansions of meromorphically integrable potentials, then this proves that no other exist (if two meromorphically homogeneous potentials coincide up to order $k$, they are equal).

For this problem, direct search (e.g. Hietarinta's work in \cite{7}) helps a lot. Still if not enough integrable potentials are found, we only proved that the set of meromorphically integrable potentials is included inside an affine space whose dimension is bounded by $\hbox{dim}(\mathcal{E})$. Remark that this procedure is non constructive as it never allows to find new integrable potentials, but only proves at best that all of them are already found (we could still guess them through their series expansion, but due to computer limitations, we often obtain less than $10$ terms).

We now study the cases $\lambda=-1,0,2$ because these are the ones for which we know integrable potentials.

\subsection{Application to the eigenvalue $0$}\label{eigen0}

\begin{lem}\label{thm5}
Let $V$ be a holomorphic homogeneous potential on $\Omega$ of degree $-1$ such that there exists a Darboux point of the form $c=(1,0,\dots)$ with multiplier $-1$. Assume that $\Sp(\nabla^2V(c))=\{2,0\}$ and that $V$ is meromorphically integrable. Then $V=1/q_1$.
\end{lem}

\begin{proof} We just have to use Lemma \ref{thm4}. Let us first check the non degeneracy property. The functions $P_1,Q_1$ for the eigenvalue $0$ are the following
$$P_1=1\qquad Q_1=\arctanh\left( \frac{1}{t} \right)-\frac{t}{t^2-1}$$
We need to look at the following residue
$$ \underset{{t=\infty}}{\hbox{Res}}\;\; (t^2-1)^{k+1}  \left(\arctanh\left( \frac{1}{t} \right)+\alpha-\frac{t}{t^2-1} \right)^{k+2} $$
and it should be independent of $\alpha$. The easiest coefficient to study (and non trivial) seems to be the coefficient in $\alpha^{k+1}$. Denoting it $S_k$, we find after simplification
$$S_k= \underset{{t=\infty}}{\hbox{Res}}\;\;  \left( t^2-1 \right) ^{k+1} \left( \left( k+2\right)\arctanh \left( \frac{1}{t} \right)-\frac {kt+2t}{t^2-1}\right) $$
By expanding, we remark that the second term always gives a zero residue. Indeed, in the expansion, the fraction simplifies and we get a polynomial. Then, we will compute
$$S_k= \underset{{t=\infty}}{\hbox{Res}}\;\;  (k+2)\left( t^2-1 \right) ^{k+1} \arctanh \left( \frac{1}{t} \right)= \frac{k+2}{2}\int\limits_{-1}^1 \left( {t}^{2}-1 \right) ^{k+1}dt >0$$
The last equality is produced with the expansion of $\arctanh \left( \frac{1}{t} \right)$ at infinity. We deduce that
$$S_k\neq 0 \quad\forall k\geq 1$$

Using Lemma \ref{thm4}, we now know that there is a unique potential with $(1,0,\dots)$ as Darboux point with multiplier $-1$ and eigenvalue $0$. The potential $1/q_1$ satisfies these conditions, and is integrable because it is invariant by translation.
\end{proof}

\textbf{Conclusion}. After rotation, an integrable potential $V$ with a zero eigenvalue near a non degenerate Darboux point corresponds to the potential
$$V=\frac{a}{q_1},\qquad a\in\mathbb{C}^*$$

\subsection{Application to the eigenvalue $2$}\label{eigen2}

In the case of the eigenvalue $2$, we use again the same method. First we will prove that the integrability constraint of $(VE_k)$ is non degenerate at order $k\geq 3$. Thus an integrable potential is uniquely defined by its first three derivatives. In Lemma \ref{thm7}, we find integrable potentials for all possible series expansions, including an exceptional case that appears to coincide after rotation with the ``Hietarinta'' potential
\begin{equation}\label{eqhieta}
\frac{aq_1}{(q_1+\epsilon iq_2)^2}
\end{equation}

Remark now that in the case of the eigenvalue $2$, the Hessian matrix should be diagonalizable for meromorphic integrability. This is a constraint for integrability of first order variational equation, and a complete analysis of the non diagonalizable case at order $1$ is given by Duval and Maciejewski in \cite{30}. Let us first prove the non degeneracy hypothesis of Lemma \ref{thm4}.

\begin{lem}\label{thm6}
Let $V$ be a holomorphic homogeneous potential on $\Omega$ of degree $-1$ such that there exists a Darboux point of the form $c=(1,0,\dots)$ with multiplier $-1$. Assume that $\Sp(\nabla^2V(c))=\{2,2\}$. Then the integrability constraint of the $k$-th order variational equation $(VE_k)$ is non degenerate for $k\geq 3$, and degenerate at order $k=2$.
\end{lem}

\begin{proof} We need to look at the following residue
$$ \underset{{t=\infty}}{\hbox{Res}}\;\;  (t^2-1)^{k+1} \left(-\frac{6t^2-4}{t^2-1}+6 t\alpha+6 t \arctanh \left( \frac{1}{t} \right)\right)^{k+2}$$
and this residue should be independent of $\alpha$ to prove non degeneracy. We will look at
$$S_k^{(1)}:= \underset{{t=\infty}}{\hbox{Res}}\;\;  (t^2-1)^{k+1} t^{k+1} \left(-\frac{6t^2-4}{t^2-1}+6 t \arctanh \left( \frac{1}{t} \right)\right)$$
which corresponds to the coefficient in $\alpha^{k+1}$ (after simplifying a non zero factor). Still we will see that studing this sequence is not enough, as it is not always non-zero. We will also need to look at another one
$$S_k^{(2)}:= \underset{{t=\infty}}{\hbox{Res}}\;\;  (t^2-1)^{k+1}t^k \left(-\frac{6t^2-4}{t^2-1}+6 t \arctanh \left( \frac{1}{t} \right)\right)^2$$
Then, we want to prove
$$S_k^{(1)}\neq 0 \hbox{ or } S_k^{(2)}\neq 0 \qquad \forall k\geq 2$$
More precisely, we will prove that
$$S_{2k}^{(1)}\neq 0 \hbox{ and } S_{2k+1}^{(2)}\neq 0 \qquad \forall k\geq 1$$

These sequences are $D$-finite, and as such recurrence formulas can be automatically found and proved for these sequences \cite{34,35,36,37}. Following this creative telescoping approach, we found using these algorithms (either Mgfun for Maple, or holonomics for Mathematica) the following recurrences for $S_{2n}^{(1)}$
\begin{align*}
64(2n+3)(2n+1)(6n+11)(n+1)^2f(n)-\\
(20736n^5+152064n^4+439200n^3+622752n^2-431784n-116328)\\
f(n+1)+36(6n+5)(3n+5)  \left( 3n+4 \right)  \left( 6n+13 \right) \left( 6n+17 \right) f(n+2) 
\end{align*}
This recurrence can be solved explicitly and gives the formula
$$S_{2n}^{(1)}=-\frac{\pi\Gamma \left(2n+2 \right)27^{-n}}{72 \Gamma \left(n+\frac{7}{6} \right)\Gamma \left(n+\frac{11}{6} \right)}$$
This expression never vanishes. We do the same for $S_{2n+1}^{(2)}$. We find a third order recurrence and solve it
\begin{footnotesize}
$$S_{2n+1}^{(2)}=-\frac{\pi 27^{-n}\Gamma\left(2n+3 \right)}{3456\Gamma\left( n+\frac{7}{3}\right)\Gamma\left( n+\frac{5}{3} \right)}  
\sum _{{\it k}=0}^{n-1} \left(\frac{\left( 3{\it k}+4 \right) \Gamma  \left( {\it k}+5/3 \right) \Gamma  \left( {\it k}+7/3 \right)}{  \left( {\it k}+1 \right)  \left( {\it k}+2\right)  \left( 2{\it k}+3 \right)\Gamma\left( {\it k}+{\frac {13}{6}} \right)\Gamma \left( {\it k}+{\frac {11}{6}} \right)   } \right)$$
\end{footnotesize}
Using this expression, we find that $S_{2n+1}^{(2)}$ never vanish for $n\geq 1$. This proves the non degeneracy condition for order $\geq 3$. At order $2$, the two formulas vanish. Since in this case the residue is a polynomial in $\alpha$ of degree at most $2$, this implies that the residue is constant. So the $\alpha$ derivative is zero and the integrability constraint is degenerate.
\end{proof}

We now need to find integrable homogeneous potentials of degree $-1$ which admit a Darboux point $c$ with spectrum $\{2,2\}$. We already know the potential
$$\frac{a}{q_1}+\frac{b}{q_2},\qquad a,b\in\mathbb{C}^*$$
which is integrable. Computation gives that Darboux points have the eigenvalue $2$. So we need to prove that after rotation, all possible $3$-rd order derivatives can be produced. As shown below, an exceptional case will be found and will correspond to the Hietarinta potential \eqref{eqhieta}.

\begin{lem}\label{thm7}
Let $V$ be a holomorphic homogeneous potential on $\Omega$ of degree $-1$ such that there exists a Darboux point of the form $c=(1,0,\dots)$ with multiplier $-1$ and $\nabla^2 V(c)=2 I_2$. Then it corresponds after rotation to a potential of the form
\begin{equation}\label{eq9}
\frac{a}{q_1}+\frac{b}{q_2},\qquad a,b\in\mathbb{C}^*
\end{equation}
except if $V$ admits a series expansion in $q$ at $c$ of the following form
$$V(c+q)=1-q_1+(q_1^2+q_2^2)+dq_1^3+3dq_1q_2^2 \pm 2id q_2^3+o(q^3)$$
for which $V$ corresponds after rotation to the Hietarinta potential \eqref{eqhieta}.
\end{lem}

\begin{proof}
We expand $V$ at $c=(1,0,\dots)$ which gives
$$V(c+q)=V(c)-q_1+(q_1^2+q_2^2)+aq_1^3+bq_1^2q_2+cq_1q_2^2+dq_2^3+o(q^3)$$
Using remark \ref{remhom} page \pageref{remhom}, we obtain the following values
$$\partial_{1,1,1}V(c)=-6\quad \partial_{1,1,2}V(c)=0\quad \partial_{1,2,2}V(c)=-6$$
Then the series expansion of $V$ on $c$ is always of the form
$$V(c+q)=1-q_1+(q_1^2+q_2^2)-(q_1^3+3q_1q_2^2+dq_2^3)+o(q^3)$$
where $d\in\mathbb{C}$. We want now prove that such an expansion can correspond to the expansion of the potential \eqref{eq9} after rotation. So we will make a rotation of the coordinates $q_1,q_2$. After rotation, the potentials \eqref{eq9} can be written
$$\frac{a}{\mathfrak{c}q_1-\mathfrak{s}q_2}+\frac{b}{\mathfrak{s}q_1+\mathfrak{c}q_2},\qquad \mathfrak{c}^2+\mathfrak{s}^2=1,a,b\in\mathbb{C}^*$$
The condition of admitting a Darboux point at $c=(1,0)$ with multiplier $-1$ implies that this family of potentials can be written
$$V=\frac{\mathfrak{c}^3}{\mathfrak{c}q_1-\mathfrak{s}q_2}+\frac{\mathfrak{s}^3}{\mathfrak{c}q_1+\mathfrak{s}q_2},\hbox{ with } \mathfrak{c}^2+\mathfrak{s}^2=1$$
We make series expansion of this expression near $c=(1,0)$ and by identification, we get
$$\frac{-\mathfrak{c}^2+\mathfrak{s}^2}{\mathfrak{c}\mathfrak{s}}=d ,\hbox{ with } \mathfrak{c}^2+\mathfrak{s}^2=1.$$
This produces the solution
$$\mathfrak{s}=\frac{1}{\sqrt{2}}\sqrt{\frac{4+d^2+\sqrt {4d^2+d^4}}{4+d^2}}$$
which is valid for $d\neq  2i\epsilon$ with $\epsilon=\pm 1$.

For $d= 2i\epsilon$, there are no solutions, and this is the exceptional case. Let us check that the Hietarinta potential \eqref{eqhieta} corresponds to this case. We will only study the case $\epsilon=+1$ (the case $\epsilon=-1$ is exactly similar). After rotation, we get
$$V=a\frac{q_1^2+q_2^2}{(q_1+iq_2)^3}+\frac{ab}{q_1+i q_2},\qquad a,b\in\mathbb{C}^* $$
The condition of having a Darboux point at $c=(1,0)$ with multiplier $-1$ gives
$$V=-\frac{1}{2}\frac{q_1^2+q_2^2}{(q_1+iq_2)^3}+\frac{3}{2(q_1+iq_2)}$$
We compute the series expansion at $c=(1,0,\dots)$ and this gives exactly the good expansion. Using Lemma \ref{thm4},\ref{thm6}, we know that for each series expansion at order $3$, there exists at most one meromorphically integrable potential. We found a meromorphically integrable potential for any possible series expansion at order $3$, and so we found all meromorphically integrable potentials with the eigenvalue $2$.
\end{proof}

\section{Case of the eigenvalue $-1$}\label{secCas}

The case of the eigenvalue $-1$ is much more difficult because the non degeneracy hypothesis of Lemma \ref{thm4} does not hold. We need to use a completely different method. We already guess that this case will correspond to the potential $V=r^{-1}$ invariant by rotation. This potential integrates in polar coordinates, which are the action-angles coordinates for this potential. Then, to see some pattern in higher variational equations, it seems to be a good idea to compute all these higher variational equations in polar coordinates. The integrable case $V=r^{-1}$ is quite simple to describe in polar coordinates, as it is the only potential that does not depend on the angle coordinate (the coordinate $\theta$).

So we will first compute higher variational equations up to order $2$. Then we will recognize that a strong integrability constraint comes from a particular a simple perturbation, which will allows us to study only a subsystem of these higher variational equations system. We prove in particular that the $k$-th variational equation possesses invariant vector spaces; we will find one which is small enough such that the reduced system on this subspace can be more easily analyzed, and not too simple. The solutions have non-commutative monodromy which puts constraints on the derivatives of the potential. The first non-trivial integrability condition appears at order $3$ with a dilogarithmic term. At higher order, we will prove that a non zero $(k+1)$-th derivative $U^{(k+1)}(0)\neq 0$ (the potential being $V=r^{-1}U(\theta)$ in polar coordinates on a neighbourhood of $\theta=0$, eventually for a good branch choice) implies that the Picard Vessiot field of the $(2k-1)$-th variational equation contains a dilogarithmic term, and thus that the Galois group is not Abelian.

\begin{prop}\label{thm9} (proved page \pageref{secPro})
Let $V$ be a holomorphic homogeneous potential on $\Omega$ of degree $-1$ such that there exists a Darboux point of the form $c=(1,0,\dots)$ with multiplier $-1$. Assume that $\Sp(\nabla^2 V(c))=\{2,-1\}$. If $V$ is integrable, then $V =r^{-1}$.
\end{prop}

\bigskip

The strategy of the proof will be the following
\begin{itemize}
\item We consider a potential $V=r^{-1} U(\theta)$ with $U(0)=U'(0)=\dots= U^{(k)}(0)=0$ (with $k\geq 2$). We want to prove that the $(2k-1)$-th order variational equation has a non-Abelian Galois group.
\item We first find an invariant vector space $\mathcal{W}$ of the $(2k-1)$-th order variational equation (section \ref{inv}).
\item Reduced on $\mathcal{W}$, the $(2k-1)$-th order variational equation becomes more manageable, and solve it for some of the unknowns (section \ref{secPro}). We find that the solutions contain a dilogarithmic term if $U^{(k+1)}(0)\neq 0$, implying that the Galois group of the whole equation is not Abelian.
\item By induction, we conclude that for $V$ being integrable, all derivatives of $U$ at $0$ should be zero, thus concluding that $V=r^{-1}$.
\end{itemize}

\bigskip

Remark that the two following subsections are not necessary to prove Theorem \ref{thm9}. They are here to show in details why the previous approach through the non degeneracy property does not work. In particular, the Galois group of variational equations depends on the highest derivative $U^{(k+1)}(0)$, but we cannot obtain a term with a non-commutative monodromy: the most we obtain is simply an integral of a rational fraction, and thus the Galois group is either of the form $G$ or $G\times \mathbb{C}$, depending on $U^{(k+1)}(0)$. This is thus not sufficient to obtain a uniqueness theorem.

\subsection{Looking at variational equation of order $2$}

Before proving Proposition \ref{thm9}, let us first look only at order $2$. 

\begin{lem}
Let $V=r^{-1}U(\theta)$ be a holomorphic homogeneous potential on $\Omega$ of degree $-1$ such that there exists a Darboux point of the form $c=(1,0,\dots)$ with multiplier $-1$. We note locally near $\theta=0$ in polar coordinates $V(q,r,\mathbf{w}) =r^{-1}U(\theta)$ on the branch on which lies $c$. Assume that $\Sp(\nabla^2 V(c))=\{2,-1\}$. The Galois group of the second order variational equation near the homothetic orbit associated to $c$ of the Hamiltonian field in polar coordinates is $\mathbb{C}^2$ if $U^{(3)}(0)\neq 0$ and $\mathbb{C}$ if $U^{(3)}(0)= 0$.
\end{lem}

\begin{proof}
The potential $V=r^{-1} U(\theta)$ gives the following differential equations in polar coordinates
\begin{equation}\label{eqpolar}
\ddot{r}-r\dot{\theta}^2=-\frac{1}{r^2}U(\theta),\qquad \ddot{\theta}+2\frac{\dot{r}}{r}\dot{\theta}=\frac{1}{r^3}U'(\theta).
\end{equation}
Let us linearize this equation near a homothetic orbit corresponding to the critical point $0$ of $U$. We assume moreover that $U''(0)=0$, which corresponds to $\Sp(\nabla^2V(c))=\{2,-1\}$, and that $U(0)=1$ after dilatation (which implies that the multiplier is $-1$). We get at first order
\[\ddot{r}=\frac{2 U(0)}{\phi^3}r,\qquad \ddot{\theta}+2\frac{\dot{\phi}}{\phi}\dot{\theta}=\frac{U''(0)}{\phi^3} \theta=0 \quad \hbox{ with } \dot{\phi}^2/2=\phi^{-1}+1 \]
This parametrization $\phi$ is the same as in definition \ref{def22} with $k_0=1$ (it happens that for computing variational equations in polar coordinates, we no longer need to consider parametrizations with $k_0>1$). We now make the variable change $\dot{\phi}/\sqrt{2}\longrightarrow t$ which gives
\[\frac{1}{2}\, \left( {t}^{2}-1 \right) \ddot{r} +2\dot{r} t-2\,r =0,\qquad \frac{1}{2}\, \left( {t}^{2}-1 \right) \ddot{\theta} =0. \]
Of course these equations are integrable (because they correspond to an integrable case of Theorem \ref{thmmorales} and the solutions are
$$r(t)=C_1 P_2(t)+C_2Q_2(t), \qquad \theta(t)=C_3t+C_4$$
Now we take a look at second-order variational equations. As in equation \eqref{nondeg2}, we first compute the series expansion of order $2$ of equation \eqref{eqpolar} at $\dot{r}=\dot{\phi},\dot{\theta}=0,r=\phi,\theta=0$
\begin{equation}\begin{split}
\ddot{r}-\phi \dot{\theta}^2 =\frac{2}{\phi^3} r-\frac{3}{\phi^4}r^2\\
\ddot{\theta}+2\frac{\dot{\phi}}{\phi}\dot{\theta}+\frac{2}{\phi} \dot{r}\dot{\theta}-\frac{2\dot{\phi}}{\phi^2} r\dot{\theta}=\frac{1}{\phi^3} U^{(3)}(0)\theta^2\\
\end{split}\end{equation}
Using again the same procedure as in page \pageref{nondeg2}, the second order variational equation may be written (after variable change $\dot{\phi}/\sqrt{2}\longrightarrow t$)
\begin{equation}\label{eq6}\begin{split}
\frac{1}{2}(t^2-1) \ddot{r}_2 +2t\dot{r}_2-\frac{1}{2}\dot{\theta}_1^2=2r_2-3(t^2-1)r_1^2\\
\frac{1}{2}\ddot{\theta}_2+2tr_1\dot{\theta}_1+(t^2-1)\dot{r}_1\dot{\theta}_1=\frac{1}{2}\frac{1}{t^2-1}U^{(3)}(0)\theta_1^2
\end{split}\end{equation}
where $r_1,\theta_1$ are solutions of the first order variational equation. The first equation of \eqref{eq6} integrates because the non homogeneous term
$$-\frac{1}{2}\dot{\theta}_1^2=-\frac{1}{2}C_3^2$$
corresponds to a particular solution to $r_2$ of the form
$$\int (t^2-1)Q_2^3 dt$$
(where $Q_2$ is defined page \pageref{eq1}) and whose monodromy is commutative. For the second equation of \eqref{eq6}, we find
$$\frac{1}{2} \ddot{\theta}_2+2t(C_1 P_2+C_2Q_2)C_3+(t^2-1)(C_1 \dot{P}_2+C_2\dot{Q}_2)C_3=\frac{1}{2}\frac{1}{t^2-1}U^{(3)}(0)(C_3t+C_4)^2$$
The solution can be written as
$$2\iint -2t(C_1 P_2+C_2Q_2)C_3-(t^2-1)(C_1 \dot{P}_2+C_2\dot{Q}_2)C_3+\frac{1}{2}U^{(3)}(0)\frac{(C_3t+C_4)^2}{t^2-1} dt^2$$
We have the following formulas
$$P_2=4t\quad Q_2=\frac{3}{8}t \arctanh \left( \frac{1}{t} \right)+\frac {\frac{1}{4}-\frac{3}{8}t^2}{t^2-1}$$ 
The terms in $P_2$ are polynomials and integrate well. For the terms in $Q_2$, we find the following expression (up to integration constants)
\begin{align*}
\iint -2tQ_2-(t^2-1)\dot{Q}_2 dt dt= \iint \frac{3}{8}(3t^2-1)\arctanh\left( \frac{1}{t} \right)-\frac{9}{8}t dtdt=\\
\frac{3}{32}(t^2-1)^2\arctanh\left(\frac{1}{t} \right)-\frac{3}{32} t^3
\end{align*} 
Then the only term left is
\begin{equation}\label{eq7} 
\iint \frac{1}{2}U^{(3)}(0)\frac{(C_3t+C_4)^2}{t^2-1} dt dt \in\mathbb{C}\left[t,\arctanh\left( \frac{1}{t} \right),\ln\left( t^2-1 \right) \right]
\end{equation}
Then the second-order variational equation is always integrable, and moreover we have computed its Galois group
\begin{itemize}
\item If $U^{(3)}(0)\neq 0$ then the Galois group of \eqref{eq6} is $\mathbb{C}^2$
\item If $U^{(3)}(0)= 0$ then the Galois group of \eqref{eq6} is $\mathbb{C}$
\end{itemize}
\end{proof}

\subsection{Degeneracy of higher variational equations}

Let us now look at the non-degeneracy property of variational equation. In the case of eigenvalue $-1$, we have $\epsilon_0=0$ (see page \pageref{eq1}) because the first order variational equation has two independent rational solutions
$$P_0=t(t^2-1)^{-1}\qquad Q_0=(t^2-1)^{-1}$$
Still we could think that a similar condition to \ref{def3} of the non-degeneracy could still apply. The term corresponding to the highest order derivative of the potential is given by
$$\int (t^2-1)^{k} (aQ_0+bP_0)^{k+1} dt dt \in\mathbb{C}\left[t,\arctanh\left( \frac{1}{t} \right),\ln\left( t^2-1 \right) \right]$$
and thus this term has always a commutative monodromy. Computing variational equations of the Hamiltonian field in polar coordinate does not help either.

\begin{prop}\label{thm10}
Let $V$ be holomorphic homogeneous potential on $\Omega$ of degree $-1$ with a Darboux point of the form $c=(1,0,\dots)$ with multiplier $-1$ (and thus $c$ correspond to an angle $\theta=0$). Assume that $\Sp(\nabla^2 V(c))=\{2,-1\}$. Assume that $U^{(i)}(0)=0 \;\forall i=1\dots k$ then the fact that the identity component of the Galois group of the $k$-th order variational equation is Abelian or not does not depend on the value of $U^{(k+1)}(0)$.
\end{prop}

\begin{proof}
The case of order $2$ corresponds to the previous proof. Let us look now at order $k$. We pick in the equations the non homogeneous terms where $U^{(k+1)}(0)$ appear. The only equation where such term appears is the following
\begin{equation}\label{eqhighdeg}
\frac{1}{2} \ddot{\theta}_k = \frac{1}{k!}\frac{U^{(k+1)}(0)}{t^2-1}\theta_1^k
\end{equation}
where $\theta_1$ is solution of the first order variational equation (we have removed all non homogeneous terms in which $U^{(k+1)}(0)$ does not appear). The solution for $\theta_1$ is $\theta_1(t)=at+b$, and then substituting this expression, we obtain that the solution of equation \eqref{eqhighdeg} is of the form
$$\theta_k(t)=\frac{2U^{(k+1)}(0)}{k!}\iint \frac{(at+b)^k}{t^2-1} dt dt \in\mathbb{C}\left[t,\arctanh\left( \frac{1}{t} \right),\ln\left( t^2-1 \right) \right]$$
which can be checked using recursive integration by parts. This term then has a commutative monodromy, and then the fact that the identity component of the Galois group is Abelian or not does not depend on the value of $U^{(k+1)}(0)$.
\end{proof}

\begin{rem}
We remark that the integral
$$\iint \frac{(at+b)^k}{t^2-1} dt dt$$
does not belong to the Picard-Vessiot field of the first order variational equation (which is $\mathbb{C}\left[t,\arctanh\left(\textstyle{\frac{1}{t}}\right) \right]$). So the Picard-Vessiot field of the $k$-order variational equation is generically larger (when $U^{(k+1)}(0)\neq 0$), and the Galois group becomes at least $\mathbb{C}^2$. But this does not give us any integrability condition, as the Galois group could still be Abelian (with a higher dimension). This is precisely why this case is particularly difficult. We cannot use non degenerescence properties, and so we need to keep these unknown derivatives $U^{(i)}(0)$ as parameters and go higher in the order of variational equations.
\end{rem}

\subsection{An invariant subspace of the $(2k-1)$-th order variational equation}\label{inv}

Let us first look at variational equation of order ${2k-1}$. We compute the series expansion of equation \eqref{eqpolar} at $\dot{r}=\dot{\phi},\dot{\theta}=0,r=\phi,\theta=0$ of order $k\geq 3$
\begin{equation}\begin{split}\label{eqvar}
\ddot{r}-\phi \dot{\theta}^2-r\dot{\theta}^2 =\sum\limits_{i=1}^{2k-1}\frac{(-1)^{i+1}(i+1)}{\phi^{i+2}}r^i+\\ \sum\limits_{i=k+1}^{2k-1} \sum\limits_{j=0}^{i-k-1} \frac{(-1)^{j+1}(j+1)}{\phi^{j+2}(i-j)!}U^{(i-j)}(0)r^j\theta^{i-j}\\
\ddot{\theta}+\sum\limits_{i=0}^{2k-2} \frac{2(-1)^i\dot{\phi}}{\phi^{i+1}}r^i\dot{\theta}+\sum\limits_{i=0}^{2k-3}\frac{2(-1)^i}{\phi^{i+1}}\dot{r} r^i\dot{\theta} =\\
\sum\limits_{i=k}^{2k-1} \sum\limits_{j=0}^{i-k} \frac{(-1)^{j}(j+1)(j+2)}{2\phi^{j+3}(i-j)!}U^{(i-j+1)}(0)r^j\theta^{i-j} 
\end{split}\end{equation}
The $U^{(i)}(0)\;\;i=k+1\dots 2k$ are parameters. Using these series expansions, let us build now the ${2k-1}$-th variational equation. As explained in section 4.2., we use the substitution
$$y_{i,j,l,m}=\dot{r}^i\dot{\theta}^jr^l\theta^m$$
For $i+j+l+m={2k-1}$, the differential equation system for the $y_{i,j,l,m}$ is the $(2k-1)$-symmetric power of the first order variational equation. As we do not want to compute the complete solution of the ${2k-1}$-th variational equation, we will only compute one well chosen solution. To simplify the equation, we will first build an invariant vector space, and then solve the variational on this subspace (and in fact only for some of the variables).

\begin{lem}\label{leminv}
Assume that $U(0)=1,U^{(i)}(0)=0\;\;\forall i= 1\dots k$. Then the vector space $\mathcal{W}$ given by the conditions
$$y_{i,j,l,m}=0\quad \forall i,j,l,m \hbox{ such that } j+m\geq k,\; i+l\geq 1 \quad (C_1)$$
$$y_{i,j,l,m}=0\quad \forall i,j,l,m \hbox{ such that } j\geq 1,\; i+l\geq 1  \quad (C_2)$$
$$y_{i,j,l,m}=0\quad \forall i,j,l,m \hbox{ such that } j\geq 2  \quad (C_3)$$
$$y_{i,j,l,m}=0\quad \forall i,j,l,m \hbox{ such that } j\geq 1,\; j+m\geq k+1  \quad (C_4)$$
is an invariant vector space of the $2k-1$-th variational equation.
\end{lem}

Let us explain why we consider this vector space. We want to build an analogue of normal variational equation (which is properly defined only at order $1$). In particular, we want to suppress all terms in $\dot{r},r$ in the second equation of \eqref{eqvar}. This is the reason of the conditions $(C_1),(C_2)$. The conditions $(C_3),(C_4)$ are necessary to $\mathcal{W}$ to be invariant. 

The reason of conditions $(C_3),(C_4)$ is that we need to suppress the term in $\dot{\theta}^2$ corresponding to centrifugal force. Physically, this means that the perturbation we are interested in will correspond to very small values of $\dot{\theta}$, and if $\dot{\theta}$ is non-zero, $\theta$ should be small also (condition $(C_4)$). The condition $(C_2)$ implies that the Coriolis force coming from perturbations of order $\geq 2$ is negligible.

\begin{proof}
We only need to prove that the derivative in time of these $y_{i,j,l,m}=0$ only involve these $y_{i,j,l,m}$ (because then the differential equation being linear, $0$ will be solution of this subsystem). Let us differentiate $\dot{r}^i\dot{\theta}^jr^l\theta^m$. Using Leibniz differentiation rule, this produces $4$ terms
\begin{equation}\label{deriv}
m\dot{r}^i\dot{\theta}^{j+1}r^l\theta^{m-1}+l\dot{r}^{i+1}\dot{\theta}^jr^{l-1}\theta^m+i\ddot{r}\dot{r}^{i-1}\dot{\theta}^jr^l\theta^m+j\ddot{\theta}\dot{r}^i\dot{\theta}^{j-1}r^l\theta^m
\end{equation}
The two first terms still satisfy the condition, and for the two last ones we have to replace $\ddot{r},\ddot{\theta}$ using relation \eqref{eqvar}.
For $\ddot{r}$
\begin{itemize}
\item If condition $(C_1)$ is satisfied, the first two terms of the first equation \eqref{eqvar} will produce terms with degree in $\dot{\theta}$ at least $2$, and thus satisfying condition $(C_3)$. In the right part, the only potentially problematic terms in the sums (of the first equation \eqref{eqvar}) are in the second one for $j=0$
$$\sum\limits_{i=k+1}^{2k-1} -\frac{1}{\phi^2i!}U^{(i)}(0)\theta^i$$
As $j+m\geq k$, after multiplication we get terms of degree in $(\theta,\dot{\theta})\geq 2k+1$, and so are discarded because we study variational equation of order $2k-1$.
\item If condition $(C_2)$ is satisfied, the two first terms of the first equation \eqref{eqvar} will produce terms with degree in $\dot{\theta}$ at least $3$, and thus satisfying condition $(C_3)$. In the right part, the only potentially problematic terms are in the second sum (of the first equation \eqref{eqvar}) for $j=0$. These will produce terms with degree in $\theta$ at least $k+1$ and degree in $\dot{\theta}$ at least $1$, so satisfying condition $(C_4)$.
\item If the condition $(C_3)$ is satisfied, the degree in $\dot{\theta}$ cannot decrease, and so the three terms satisfy condition $(C_3)$.
\item If condition $(C_4)$ is satisfied, the degree in $\dot{\theta}$ and in $\theta$ cannot decrease, and so the three terms satisfy condition $(C_4)$.
\end{itemize}
For $\ddot{\theta}$
\begin{itemize}
\item If condition $(C_1)$ is satisfied, the two first terms of the second equation \eqref{eqvar} contains $\dot{theta}$, and thus condition $(C_1)$ is still satisfied. In the right sum, problematic terms would be those containing no $\dot{\theta},\theta$, corresponding to $i=j$, but these are out of the interval of summation.
\item If condition $(C_2)$ is satisfied, the two first terms of the second equation \eqref{eqvar} will produce terms with degree in $\dot{\theta}$ at least $1$ and degree in $r,\dot{r}$ at least $1$, and thus satisfying condition $(C_2)$. The right sum will produce terms with degree in $\theta$ at least $k$ and degree in $r,\dot{r}$ at least $1$, so satisfying condition $(C_1)$.
\item If condition $(C_3)$ is satisfied, the two first terms of the second equation \eqref{eqvar} will produce terms with degree in $\dot{\theta}$ at least $2$, thus satisfying condition $(C_3)$. The right sum produces terms of degree in $\dot{\theta}$ at least $1$ and degree in $\theta$ at least $k$, so satisfying condition $(C_4)$.
\item If condition $(C_4)$ is satisfied, the two first terms of the second equation \eqref{eqvar} will produce terms with degree in $\dot{\theta}$ at least $1$ and degree in $\theta,\dot{\theta}$ at least $k+1$, thus satisfying condition $(C_4)$. The right sum produces terms of degree in $\theta,\dot{\theta}$ at least $2k$, and thus which are suppressed because we study only the $(2k-1)$-th variational equation
\end{itemize}
So this subspace is invariant.
\end{proof}
We can now formally remove the corresponding terms in equation \eqref{eqvar}
\begin{equation}\begin{split}\label{eqvar3}
\ddot{r}= \sum\limits_{i=1}^{2k-1}\frac{(-1)^{i+1}(i+1)}{\phi^{i+2}}r^i+ \sum\limits_{i=k+1}^{2k-1} -\frac{U^{(i)}(0)}{\phi^2i!}\theta^i \\
\ddot{\theta}+2\frac{\dot{\phi}}{\phi}\dot{\theta}= \sum\limits_{i=k}^{2k-1} \frac{U^{(i+1)}(0)}{\phi^3 i!}\theta^i
\end{split}\end{equation}
As we see, in the second equation, terms containing $(\dot{r},r)$ no longer appear.

\begin{rem}
One of the interest of the invariant space $\mathcal{W}$ is that its dimension is much lower. Moreover, in the following, we will only be interested by the second equation of \eqref{eqvar3}, and thus reasoning in the dimension of $\mathcal{W}'=\mathcal{W}\cap \{y_{i,j,l,m}=0,\;\;\forall i+l\geq 1 \}$. By guessing, we find
$$\dim (VE_{2k-1})=\frac{1}{6}(2k+3)(k+4)(2k^2+11k+17) \qquad \dim \mathcal{W}'=3k-1$$
$$\dim \mathcal{W}=4 \prod\limits_{s=0}^{k} \left(\frac{7s^3+51s^2+134s+114}{7s^3+30s^2+53s+24}\right) \sim  \frac{7}{6} k^3$$
Clearly, studying the second equation of \eqref{eqvar3} on $\mathcal{W}$ and computing solutions for variables in $\mathcal{W}'$ will be much easier than on the complete system.
\end{rem}

\subsection{Proof of Proposition \ref{thm9}}\label{secPro}

\begin{proof}

To prove Proposition \ref{thm9}, it is only necessary to prove that  $U^{(i)}(0)=0\;\;\forall i\in\mathbb{N}^*$. It is already proved for $i=1,2$ by hypothesis. Let us prove this by recurrence. Assume that $U^{(i)}(0)=0\;\;\forall i= 1\dots k$. We want to prove that $U^{(k+1)}(0)=0$.

Let us now study the $(2k-1)$-th order variational equation, and in particular on the invariant subspace $\mathcal{W}$ given by Lemma \ref{leminv}. We may now try to find a solution of the variational equation on this invariant subspace. We will only compute closed form expression for some of the unknowns (those who appear in the second equation of \eqref{eqvar3}). We have
$$\dot{y}_{0,0,0,m}=m y_{0,1,0,m-1}=0 \qquad \forall m\geq k+1$$
We choose the solution $y_{0,0,0,m}=0,\; m=k+1\dots 2k-2$ and $y_{0,0,0,2k-1}=1$. We also find the differential equation
$$\dot{y}_{0,0,0,k}=k y_{0,1,0,k-1} \qquad \dot{y}_{0,1,0,k-1}=-\frac{2\dot{\phi}}{\phi}y_{0,1,0,k-1}+\frac{U^{(k+1)}(0)}{\phi^3 k!}y_{0,0,0,2k-1}$$
Substituting $y_{0,0,0,2k-1}$ by its expression, we get
$$\ddot{y}_{0,0,0,k}+\frac{2\dot{\phi}}{\phi}\dot{y}_{0,0,0,k}=\frac{U^{(k+1)}(0)}{\phi^3 (k-1)!}$$
The other interesting equation of the variational equations is
$$\ddot{y}_{0,0,0,1}+\frac{2\dot{\phi}}{\phi}\dot{y}_{0,0,0,1}= \frac{U^{(k+1)}(0)}{\phi^3 k!}y_{0,0,0,k}+\frac{U^{(2k)}(0)}{\phi^3 (2k-1)!}  $$
We now make the variable change $\dot{\phi}/\sqrt{2} \longrightarrow t$. This produces the equations
\begin{equation}\begin{split}
\frac{1}{2}(t^2-1)\ddot{y}_{0,0,0,k}=\frac{U^{(k+1)}(0)}{(k-1)!}\\
\frac{1}{2}(t^2-1)\ddot{y}_{0,0,0,1}= \frac{U^{(k+1)}(0)}{ k!}y_{0,0,0,k}+\frac{U^{(2k)}(0)}{(2k-1)!}
\end{split}\end{equation}
We can now solve them. We find $y_{0,0,0,k}=$
\begin{align*}
\iint \frac{2U^{(k+1)}(0)}{(k-1)!(t^2-1)} dtdt=
-\frac{2U^{(k+1)}(0)}{(k-1)!}\left(t\arctanh\left(\frac{1}{t}\right)+\frac{1}{2}\ln\left( t^2-1 \right)\right)
\end{align*}
and then
\begin{align*}
y_{0,0,0,1}=-\frac{4U^{(k+1)}(0)^2}{k!(k-1)!}\iint \frac{1}{t^2-1} \left(t\arctanh\left(\frac{1}{t}\right)+\frac{1}{2}\ln(t^2-1)\right) dtdt-\\
\frac{2U^{(2k)}(0)}{(2k-1)!}\left(t\arctanh\left(\frac{1}{t}\right)+\frac{1}{2}\ln\left( t^2-1 \right)\right)=\\
\frac{2U^{(k+1)}(0)^2}{k!(k-1)!}\left( (t+1)(\ln(t-1)+1)\ln(t+1)-((2\ln 2+1)t-1)\ln(t-1)+\right.\\
\left.2t\hbox{dilog}((t+1)/2)\right)-
\frac{2U^{(2k)}(0)}{(2k-1)!}\left(t\arctanh\left(\frac{1}{t}\right)+\frac{1}{2}\ln\left( t^2-1 \right)\right)
\end{align*}
All the terms are in $\mathbb{C}[t,\arctanh\left( \frac{1}{t} \right),\ln\left( t^2-1 \right) ]$ except one, the dilogarithmic term
$$\dilog\left(\frac{t+1}{2}\right)=\int \frac{\ln(t+1)-\ln 2}{1-t} dt$$
The dilogarithm has a non commutative monodromy (see \cite{17}). As expected, the term in $U^{(2k)}(0)$ has a commutative monodromy. So the integrability constraint is that the dilogarithmic term should not appear. Then a necessary integrability constraint is that $U^{(k+1)}(0)=0$, which completes the recurrence. The function $U$ is meromorphic, all derivatives of $U$ are zero
$$U^{(k)}(0)=0 \;\forall k\in\mathbb{N}^*,$$
and so this implies that $U$ is constant. Then $V =r^{-1}$.
\end{proof}

To conclude, we have found all meromorphically integrable meromorphic homogeneous potentials on $\mathcal{C}$ which have a Darboux point with eigenvalues $-1$ (Proposition \ref{thm9}), $0$ (Lemma \ref{thm5}) and $2$ (Lemma \ref{thm7}). This implies Theorem \ref{thmmain2}.

\section{The other eigenvalues: $5,9,14,20,\dots$}\label{secThe}
To find all integrable potentials, we would need to study all the other possible eigenvalues. But for these larger eigenvalues, no integrable homogeneous potential of degree $-1$ is known. Then we can assume that such potentials do not exists, and we can also make a stronger assumption that at some order $k$, the $k$-th variational equation never has a Galois group whose identity component is Abelian (for any choice of derivatives of $V$ at $c$ of order $\geq 3$). 

\subsection{Axi-symmetric potentials}

We give here another application of the non degeneracy property. In the case of axi-symmetric potentials, the non degeneracy property has only to be checked for odd orders to imply a uniqueness result (because the odd order derivatives of $V$ are automatically zero). We obtain in particular the following result for axi-symmetric potentials.

\begin{thm}\label{thm11}
Let $V$ be a holomorphic homogeneous potential on $\Omega$ of degree $-1$. Assume that $V$ is invariant by the symmetry $q_2 \mapsto -q_2$, and that there exists a point of the form $c=(1,0,\dots)$ in $\Omega$. Then, up to dilatation, the set of such meromorphically integrable potentials is at most countable.
\end{thm}

\begin{rem}
Let first remark that the hypothesis of symmetry implies that the point $c=(1,0,\dots)\in \Omega$ is a Darboux point. We will prove in fact that for each eigenvalue (allowed by the Morales-Ramis Theorem) at this Darboux point, there is up to dilatation at most one such meromorphically integrable potential. We say nothing about their existence, and we only know them for $\lambda=-1,0,2$ which are respectively in polar coordinates
$$ W_0=\frac{1}{r}\qquad W_1=\frac{1}{r} \frac{1}{\cos(\theta)} \qquad W_2=\frac{1}{r} \frac{\cos(\theta)}{\cos(2\theta)}$$
\end{rem}

\begin{proof}
We just need to prove the non degeneracy property for odd orders. Indeed, for even orders, we use Euler relation which gives us all derivatives except one, the derivative in the normal direction to the straight line $\theta=0$ (see remark \ref{remhom}). But for the variational equation of even order $k$, this maximal order derivative is then of odd order $k+1$. This derivative is then automatically $0$ because we assume the symmetry. The non degeneracy is written
$$\frac{\partial}{\partial \alpha}  \underset{{t=\infty}}{\hbox{Res}}\;\; (t^2-1)^k (Q_n+\alpha\epsilon_n P_n)^{k+1} \neq0$$
We look at coefficient $\alpha^k$ of the above residue, i.e :
$$\epsilon_n^k(k+1)  \underset{{t=\infty}}{\hbox{Res}}\;\; (t^2-1)^k Q_n P_n^k=\frac{1}{2} \epsilon_n^k (k+1) \int\limits_{-1}^1 P_n^{k+1} dt$$
by using the Taylor expansion of $\arctanh\left( \frac{1}{t} \right)$ at infinity and recognizing that this sum can be written as this integral. The integer $k$ is odd, the polynomials $P_n$ are never identically $0$, then this coefficient never vanishes. This proves non degeneracy, and thus uniqueness  thanks to Lemma \ref{thm4}.
\end{proof}

\subsection{Computer experiments}

For a fixed eigenvalue, it is possible in practice to compute higher variational equations, then solve them (which comes down to search rational solutions, which can be done thanks to \cite{32,33}) and write explicitly the constraint on the highest order derivative of $V$. Below we made such a computation for $\lambda=5,9,14,20$ for variational equations up to of order $5$ or $7$.\\

\noindent
\textbf{Results}\\
For $\lambda=5,14$, we find that the only potentials integrable at order $4$ have the following series expansion
$$V_3=\frac{1}{r}\left(1+3\theta^2+\frac{125}{12}\theta^4+o(\theta^5)\right)$$ 
$$V_5=\frac{1}{r}\left(1+\frac{15}{2}\theta^2+\frac{374495}{5352} \theta^4+o(\theta^5)\right)$$
At order $5$, no possible solution is found: indeed, in section \ref{secHig}, we found that the integrability condition is of the form of affine equations in the $6$-th order derivative of $V$ (here $U^{(6)}(0)$). But at order $5$, two affine conditions are found, and they are incompatible. This proves in particular that a potential with eigenvalue $5,14$ is never integrable.\\

In the case $\lambda=9,20$, the condition of being integrable at order $2$ gives the following
$$V_4=\frac{1}{r}\left(1+5\theta^2+b\theta^3+o(\theta^3)\right)\qquad V_6=\frac{1}{r}\left(1+\frac{21}{2}\theta^2+b\theta^3+o(\theta^3)\right)$$
with an arbitrary $b$. In this case, the $2$-th order variational equation gives no constraint at all (this was already proved in \cite{18}), and thus the third order derivative of $U$ can be arbitrary. We can still continue to compute integrability constraints at higher orders, which are not trivial. At order $6$, we find the following possible series expansions
$$\frac{1}{5!}\partial_{\theta}^5V_4=\frac{363467}{4824000}b^3+\frac{112035}{8576}b$$
$$\frac{1}{6!}\partial_{\theta}^6V_4=\frac{216926052083}{10224685080000}b^4+\frac{279352141289}{54531653760}b^2+\frac{4715685295}{24563808}$$
$$\frac{1}{7!}\partial_{\theta}^7V_4=\frac{57826741017348283}{893377392990720}b+\frac{25932696791821703}{100504956711456000}b^3$$
with
$$R_4(b)=\frac{158469311}{97702546320000}b^4+\frac{372429603}{868467078400}b^2+\frac{45927}{2729312}=0$$

$$\frac{1}{5!}\partial_{\theta}^5V_6=\frac{68250852673}{4257725150000}b^3+\frac{98831601}{3475694}b$$
$$\frac{1}{6!}\partial_{\theta}^6V_6=\frac{10915637473609903}{5190230823727250000}b^4+\frac{6605928379884787}{1271076936423000}b^2+\frac{19638863047783}{10039110960}$$
\begin{align*}
\frac{1}{7!}\partial_{\theta}^7V_6=\frac{118828154548524498748866853503827777}{431797756299715943933989778480280}b+\\
\frac{8633140425176867273801758981735627411}{52895225146715203131913747863834300000}b^3
\end{align*}
with
\begin{align*}
 R_6(b)=\frac{198715111646995383}{2772435940648535262500000}b^4+\\
\frac{1448561702310687}{7921245544710100750}b^2+\frac{270431334600}{3128145145507}=0
\end{align*}

Remark the algebraic constraint on $b$: it appears at order $5$. Indeed, at order $5$, the integrability constraint is affine in the $6$-th order derivative of $U$, but is a polynomial of degree $4$ in $b$. The situation is similar to the eigenvalues $5,14$: we obtain two affine constraint in $U^{(6)}(0)$ which are compatible only if we carefully choose the parameter $b$ (as a root of $R_4,R_6$). At order $7$, we obtain again two affine conditions on $U^{(8)}(0)$, and this time we have no longer a free parameter: the potential can never be integrable at order $7$.\\

\noindent
\textit{Remark}. - These expansions of the potentials are unique and allow for each given potential to precisely compute the order at which it is integrable (here in the case of the eigenvalues $5,9,14,20$). Using this result, we can thus conclude that meromorphically integrable homogeneous potentials of degree $-1$ in the plane with eigenvalues $5,9,14,20$ do not exist. This computation strongly suggest that the same pattern will follow for higher eigenvalues, and thus that Theorem \ref{thmmain2} is in fact the complete list of integrable potentials. By this , we thus infer the two following conjecture
 
\begin{conj}
Let $V$ be a holomorphic homogeneous potential on $\Omega$ of degree $-1$ such that there exists a non-degenerate Darboux point $c\in\Omega$ with multiplier $-1$. If
$$\lambda(c)=\frac{1}{2}(n-1)(n+2) \quad n\hbox{ odd }n\geq 3$$
then $V$ is not integrable at order $5$ at $c$. If
$$\lambda(c)=\frac{1}{2}(n-1)(n+2) \quad n\hbox{ even }n\geq 4$$
then $V$ is not integrable at order $7$ at $c$.
\end{conj}

Solving this conjecture would classify completely integrable homogeneous potentials of degree $-1$ in the plane, and would probably allow with some generalization for other degrees to close completely the search of integrable homogeneous potentials (with at least some assumption on Darboux points). A partial proof up to the $5$-th variational equation would lead to classification of axi-symmetric integrable potentials and so it would imply for example that in Theorem \ref{thm11}, there are only $3$ axi-symmetric meromorphically integrable potentials, and thus that all of them are known. This would also lead to numerous theorems in higher dimension for potentials having discrete symmetry groups.

\section{Degenerate Darboux points}\label{secOth}
We have here only written about non degenerate Darboux points. Let us now look at degenerate Darboux points. The main result of this section is that they are useless for meromorphic integrability, and that even if we are looking only for rational integrability, their usefulness is limited: the only integrability condition coming from of Morales-Ramis-Simo Theorem is that they have to be multiple (which is equivalent to $\Sp(\nabla^2V(c))=\{0\}$). 

\begin{thm}\label{thm12}
Let $V$ be a rational homogeneous potential on $\Omega$ of degree $-1$. Assume there exists a degenerate Darboux point of the form $c=(1,0,\dots)\in\Omega$. If $V$ has a first integral $I$, rational on $\mathbb{C}^2\times\Omega$ and independent almost everywhere with $H$, then $\Sp(\nabla^2V(c))=\{0\}$. Conversely, if $\Sp(\nabla^2V(c))=\{0\}$, then the identity component of the Galois group of variational equation near the corresponding homothetic orbit is Abelian at any order.
\end{thm}

Here we add the restriction that $V$ is rational on $\Omega$. This is due to the fact that the variational equation (see the proof below) is not Fuchsian, and thus the Galois group over meromorphic functions could be different from the Galois group over rational functions.

\begin{proof}
We first write the potential in polar coordinates, on an open neighbourhood of $c$, $V(q_1,q_2,r,\mathbf{w})=r^{-1}U(\theta)$. The first order variational equation is the following
\begin{equation}\label{eqvardegenerate}
\ddot{X_1}=0\qquad \ddot{X_2}=\frac{U''(0)}{t^3}X_2
\end{equation}
Let $M$ be an open set of $\mathbb{C}^2\times \Omega$, containing the orbit
$$\Gamma= \{ q=(t c_1,t c_2),\;r=t,\; p=(c_1,c_2),\; t\in \mathbb{C}^* \} $$
and such that the Hamiltonian is holomorphic on $M$. To this orbit $\Gamma$, we add singular points $t=0,\infty$, noting it $\bar{\Gamma}$. We now use Theorem 2. of Morales-Ramis-Simo \cite{19} in its version with $\bar{\Gamma}$. As said in their article, this Theorem is still valid when adding singular points to the orbit $\Gamma$, and then considering the differential Galois group over the meromorphic functions on $\bar{\Gamma}$ (see also Morales-Ramis \cite{5} p 114). If the potential $V$ is rationally integrable (and thus meromorphic on a neighbourhood of $\bar{\Gamma}$, the variational equation \eqref{eqvardegenerate} should have a Galois group with an Abelian identity component over the base field of meromorphic functions on $\bar{\Gamma}$. As $\bar{\Gamma}\simeq \bar{\mathbb{C}}$, this base field is the rational functions $\mathbb{C}(t)$.

Let us now compute the Galois group of equation \eqref{eqvardegenerate}. The first equation is clearly integrable. Assume now that $U''(0)\neq 0$. For the second one, we make a linear variable change and this gives
\begin{equation}\label{eq10}
\ddot{y}=\frac{1}{t^3}y
\end{equation}
Using the Kovacic algorithm, we find that the Galois group of this equation is $SL_2(\mathbb{C})$, and thus connected and non Abelian. So the only possibility left is $U''(0)= 0$, for which equation \eqref{eqvardegenerate} has a Galois group equal to $\{Id\}$ (thus Abelian).

Now assume the reverse, that $U''(0)=0$. The first order variational equation is written
$$\ddot{X_1}=0\qquad \ddot{X_2}=0$$
We already know that Morales-Ramis-Simo integrability condition is satisfied at order $1$, and we know want to test it at any order.

\begin{lem}
The algebra $\mathcal{A}=\mathbb{C}[t,\frac{1}{t},\ln t]$ is stable by integration.
\end{lem}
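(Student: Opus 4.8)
The plan is to reduce the statement to a computation on monomials and then integrate each monomial explicitly. By definition, every element of $\mathbb{C}[t,\frac{1}{t},\ln t]$ is a finite $\mathbb{C}$-linear combination of monomials $t^a (\ln t)^b$ with $a\in\mathbb{Z}$ and $b\in\mathbb{N}$. Since integration is $\mathbb{C}$-linear, it suffices to show that $\int t^a(\ln t)^b\,dt$ lies in $\mathbb{C}[t,\frac{1}{t},\ln t]$ for each such pair $(a,b)$. I would split the argument according to whether $a=-1$ or not, because the exponent $-1$ is precisely where the naive power rule degenerates and where the generator $\ln t$ is produced.

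First I would treat the generic case $a\neq -1$ by induction on $b$, using integration by parts with $u=(\ln t)^b$ and $dv=t^a\,dt$. This yields the reduction formula
\[
\int t^a(\ln t)^b\,dt=\frac{t^{a+1}}{a+1}(\ln t)^b-\frac{b}{a+1}\int t^a(\ln t)^{b-1}\,dt,
\]
and the base case $b=0$ gives $\int t^a\,dt=\frac{t^{a+1}}{a+1}$. Iterating shows the antiderivative equals $t^{a+1}$ times a polynomial in $\ln t$ of degree $b$; since $a+1\in\mathbb{Z}$, the factor $t^{a+1}$ lies in $\mathbb{C}[t,\frac{1}{t}]$, so the whole expression stays in the algebra. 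For the exceptional case $a=-1$, I would simply observe that $\frac{d}{dt}(\ln t)^{b+1}=(b+1)\frac{(\ln t)^b}{t}$, so
\[
\int\frac{(\ln t)^b}{t}\,dt=\frac{(\ln t)^{b+1}}{b+1},
\]
which is a polynomial in $\ln t$ and hence again an element of $\mathbb{C}[t,\frac{1}{t},\ln t]$.

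There is no serious obstacle here: the only point requiring care is isolating the exponent $a=-1$, where the power-rule denominator $a+1$ vanishes and the integration instead raises the degree in $\ln t$ by one; everything else is routine. Combining the two cases and invoking linearity gives that $\int f\,dt\in\mathbb{C}[t,\frac{1}{t},\ln t]$ for every $f$ in the algebra, which is exactly the claimed stability under integration.
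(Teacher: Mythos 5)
Your proof is correct and follows essentially the same route as the paper: decompose into monomials $t^a(\ln t)^b$, integrate by parts to handle the generic exponent, and use the explicit formula $\int t^{-1}(\ln t)^b\,dt=\frac{(\ln t)^{b+1}}{b+1}$ at the exceptional exponent $a=-1$. Your organization of the case split ($a\neq -1$ via a single reduction formula versus $a=-1$ directly) is in fact slightly cleaner than the paper's, but the underlying argument is the same.
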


\begin{proof} We consider $f\in \mathbb{C}[t,\frac{1}{t},\ln t]$ and we write it as a linear combination of terms of the type
$$t^n \ln(t)^m\quad n\in\mathbb{Z},\; m\in\mathbb{N}$$
If $n\geq 0$, then we use integration by parts to decrease $m$ until $0$. If $n<0$, We use integration by part to increase $n$ up to $n=-1$. We then have the formula
$$\int \frac{1}{t} \ln(t)^m dt=\frac{1}{m+1} \ln(t)^{m+1}$$
Then all functions in $\mathbb{C}[t,\frac{1}{t},\ln t]$ have a primitive in $\mathbb{C}[t,\frac{1}{t},\ln t]$.
\end{proof}

We now use this Lemma, remarking the following phenomenon. The solutions of higher variational equation are in fact solutions of non homogeneous linear differential equations and the non homogeneous terms are produced only using products of lower order solutions and functions $t^{-k}$. So the solutions always live in some algebra in which we take recursively integrations. So we apply the method of variation of constants to find the solutions. Moreover, the Wronskian of $\ddot{X_2}=0$ is equal to $1$ (and also for the higher variational equations matrices), then we never have to divide. So all solutions live in the algebra $\mathcal{A}$ which is stable by integration. Then the Picard Vessiot field is
$$K_i=\mathbb{C}(t) \hbox{ or } \mathbb{C}(t,\ln\; t)\quad G_i= \{id\} \hbox{ or }\mathbb{C}$$
The Galois group is in both cases Abelian at any order.
\end{proof}

\begin{rem}
We were only able to analyse rational first integrals. Here the variational equation \eqref{eqvardegenerate} is not Fuchsian at $0$. This condition is a hypothesis of our Lemma 3 in \cite{40} which proves that the Galois groups over the meromorphic functions and the Galois group over the rational functions are equal. Thus, we cannot use this Lemma, and we need to use Morales-Ramis-Simo Theorem over $\bar{\Gamma}$. So only the rational first integrals can be analysed. To have a ``reasonable definition of integrability'', we than need to assume that $V$ is rational, as if the potential $V$ itself is not meromorphic for $r=0$, for example when $U$ is not a rational function in $\exp i\theta$, then the Hamiltonian (the only first integral we know in advance) would not be rational and then excluded from this analysis.
\end{rem}

This type of proof appears to be very general. Indeed, if some potential appears to be integrable at all order near a particular solution, without known first integral, the Picard Vessiot field is often not growing. If the coefficients of the potential are not well adjusted to avoid creating further monodromy, it is probably because it is not possible. Looking at the non linear version of variational equation in \cite{19} p 860 (and also here section \ref{secHig}), we see that the solutions of higher variational equations are in fact solutions of non homogeneous linear differential equations and the non homogeneous terms are produced only using products. As such equation can be solved using the method of variation of parameters, the solutions always live in some algebra in which we take recursively integrations. For homogeneous potentials of degree $-1$ and non degenerate Darboux points, the algebra is given by the following process
$$\mathcal{A}_0=\mathbb{C}\left[t,\frac{1}{t^2-1}\right],\qquad \mathcal{A}_{i+1}=\int \mathcal{A}_i dt$$
where $\int \mathcal{A}_i dt \supset \mathcal{A}_i $ is the algebra generated by all integrations of functions in $A_i$. We have then in particular that the Picard Vessiot of $(VE_i)$ (in the case of all the eigenvalues belong to Morales Ramis table) is always contained in the fraction field of $A_{i+1}$. These algebras contain in particular all the polylogarithms functions that give integrability constraints, but not only them.

\section{Conclusion}
We completely analysed integrability for $3$ infinite dimensional families of potentials, corresponding to eigenvalues $-1,0,2$. Through numerical computations, we conjecture that there are no integrable homogeneous potentials of degree $-1$ in the plane with other eigenvalues, and thus that the list of integrable potentials of Theorem \ref{thmmain2} is complete. This conjecture represents the last open question about homogeneous potentials in the plane of degree $-1$, outside of existence of Darboux points. It can be tested for finitely many eigenvalues, but testing them for all possible eigenvalues at once seems difficult. In principle, they could be checked using the $D$-finiteness property of the functions $P_n,Q_n$ (the fact that they satisfy linear polynomial recurrence and differential equations), but in practice direct computation seems to be way out of reach for the moment.\\

\noindent
\textbf{Some examples of potentials integrable at all order near all Darboux points}\\
In the case of non degenerate Darboux points, if we admit conjecture $2$, it will not be possible to find non integrable potentials which are integrable all orders. We then need to find homogeneous potentials either having no Darboux points at all, either only multiple degenerate Darboux points (the second derivative should vanish). The functions $U(\theta)=F\left(e^{i\theta}\right)$ with
$$F(z)=h(z^n)\quad h\hbox{ Moebius transformation, } n\in\mathbb{N}^*$$
$$F(z)=f(z^n) \hbox{ with }f(z)=\int \frac{a z^i}{(z-\alpha)^j} dz \;\;0\leq i\leq j-2,\;n\in\mathbb{N}^* \; \alpha\in\mathbb{C}^*$$
have no critical points. The functions $U(\theta)=F\left(e^{i\theta}\right)$ with
$$F(z)=h((z^n-\alpha)^m)-h(0)\quad m\geq3,\;n\in\mathbb{N}^*\;\alpha\in\mathbb{C}^*$$
with $h$ a Moebius transformation, have only degenerate Darboux points satisfying the integrability constraint.

\bigskip

These examples show that there are still open questions about integrability, but the difficulties do not rely on Morales Ramis theory but on the search of Darboux points. Potentials without non degenerate Darboux points are not common, but they still exist and a complete classification of them seems to be difficult.

\nocite{*}
\bibliographystyle{plain}
\bibliography{classificationpotentiels}
\end{document}